\newtheorem{defin}{Definition}
\newtheorem{prop}{Proposition}
\newtheorem{theorem}{Theorem}
\newtheorem{lemma}{Lemma}
\newtheorem{cor}{Corollary}
\newcommand{\p}[1]{\left(#1\right)}
\newcommand{\st}[1]{\left\{#1\right\}}
\newcommand{\ab}[1]{\left|#1\right|}
\newcommand{\abk}[1]{\left<#1\right>}
\newcommand{\quot}[1]{``#1''}
\DeclareMathOperator{\modd}{mod}
\newcommand{\namehead}[3]{
\lstset{breaklines=true, morecomment=[l]{//}, frame=single, showstringspaces=false, numbers=left}
\begin{flushright}
Nathan Fox\\
#2\\
#3\\
\end{flushright}
\ifstrequal{#1}{.}{}{
\begin{center}
{\Large Homework #1}
\end{center}}
}
\renewcommand{\p}[1]{(#1)}
\newcommand{\pb}[1]{\left(#1\right)}
\newcommand{\mbb}{\mathbf}
\newcommand{\As}{\lambda}
\newcommand{\Bs}{\mu}
\newcommand{\gh}[1]{\url{http://github.com/nhf216/thesis/blob/master#1}}
\begin{document}
%
%
\title{A New Approach to the Hofstadter $Q$-Recurrence}
\author{Nathan Fox\footnote{Department of Mathematics and Computer Science, The College of Wooster, Wooster, Ohio,
\texttt{nfox@wooster.edu}
}}
\date{}

\maketitle

\begin{abstract}
Nested recurrence relations are highly sensitive to their initial conditions.  The best-known nested recurrence, the Hofstadter $Q$-recurrence, generates sequences displaying a wide variety of behaviors.  Most famous among these is the Hofstadter $Q$-sequence, which appears to be structured at a macro level and chaotic at a micro level.  Other choices of initial conditions can lead to more predictable solutions, frequently interleavings of simple sequences.  Previous work has focused on the form of a desired solution and on describing an initial condition that generates such a solution.  In this paper, we flip this paradigm around.  We illustrate how focusing on the form of an initial condition and describing the resulting sequences can yield strange families of new solutions to nested recurrences.
\end{abstract}

\section{Introduction}
The Hofstadter $Q$-sequence~\cite{geb} is defined by the nested recurrence
\[
Q\p{n}=Q\p{n-Q\p{n-1}}+Q\p{n-Q\p{n-2}}
\]
with the initial conditions $Q\p{1}=1$ and $Q\p{2}=1$.  As successive terms are generated, the $Q$-sequence seems to behave rather chaotically.  But, plots of the sequence suggest that $Q\p{n}$ remains close to $\frac{n}{2}$, and there appears to be a sort of fractal structure in the plot.  See Figure~\ref{fig:hof} for a plot of the first ten thousand terms.

\begin{figure}
\begin{center}
\includegraphics[width=300pt]{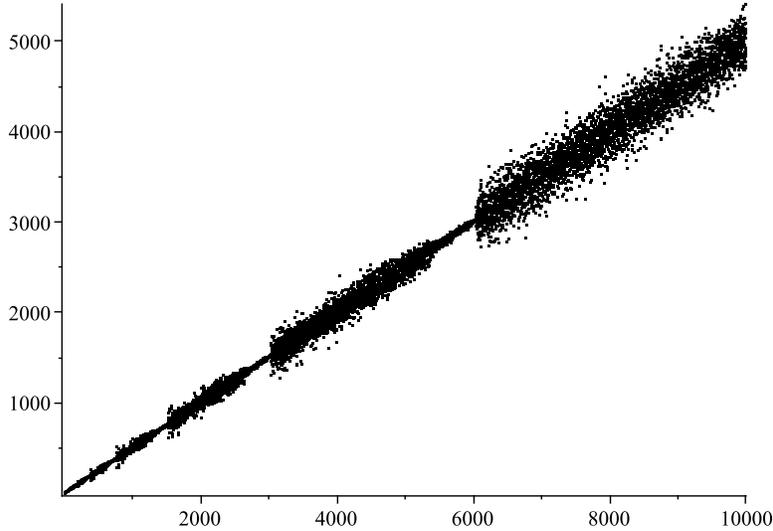}
\end{center}
\caption{Plot of the first $10000$ terms of the Hofstadter $Q$-sequence}
\label{fig:hof}
\end{figure}

A key question regarding the $Q$-sequence is whether it is, in fact, an infinite sequence.  Based on the recurrence, the value of $Q\p{n}$ depends on the value of $Q\p{n-Q\p{n-1}}$.  If $Q\p{n-1}\geq n$, then the value of $Q\p{n}$ depends on the value of $Q$ at a nonpositive index.  Since the $Q$-sequence is only defined starting with $Q\p{1}$, $Q\p{n-Q\p{n-1}}$ would not exist, and hence $Q\p{n}$ would not exist in this scenario.  If a sequence defined by a nested recurrence is finite in this way, we say that the sequence \emph{dies} after $n-1$ terms, or that it dies at index $n$.  It is still open whether the $Q$-sequence dies, but it contains at least $10^{10}$ terms~\cite{oeis}.

\subsection{Notation}

Before we continue, we introduce a few pieces of notation that appear throughout this paper.  The only recurrence relation we discuss is the Hofstadter $Q$-recurrence, but we study it with many different initial conditions.  The notation $Q\p{n}$ refers to the $n$th term of the $Q$-sequence itself.  The notation $Q^*\p{n}$ refers to a generic sequence that satisfies the $Q$-recurrence.  For any other specific sequence satisfying the $Q$-recurrence, we use $Q$ with a subscript that we define for that particular sequence.

We use angle brackets to denote our initial conditions.  For example, $\abk{1,1}$ is shorthand for $Q^*\p{1}=1$ and $Q^*\p{2}=1$, the initial condition for the Hofstadter $Q$-sequence.  Sometimes, it is convenient to define $Q^*\p{n}=0$ for all $n\leq0$, as forcing sequences to die as previously described can limit the diversity of solutions we encounter~\cite{rusk, symbhof}.  This convention is noted with a symbol $\bar{0}$ followed by a semicolon at the start of the initial condition.  For example, $\abk{\bar{0};1,1}$ is shorthand for $Q^*\p{n}=0$ for $n\leq0$, $Q^*\p{1}=1$ and $Q^*\p{2}=1$.

Note that it is still possible for a sequence with such an extended initial condition to be finite.  If $Q^*\p{n-1}=0$ for some $n$, then $Q^*\p{n}$ would be self-referential.  This sort of issue cannot be resolved via an initial condition tweak, so we declare $Q^*\p{n}$ to be undefined in this case.  To avoid confusion with earlier terminology, we do not say that such a sequence dies.  Rather, we say that it \emph{ends} after $n-1$ terms or at index $n$.
%
%

\subsection{Preliminarities}

Nested recurrence relations, such as the Hofstadter $Q$-recurrence, are highly sensitive to their initial conditions.  For example, if we change the initial condition of the $Q$-recurrence to $\abk{3,2,1}$, we obtain a sequence consisting of three interleaved constant or linear sequences~\cite{golomb}, which we denote by $Q_G$:
\[
\begin{cases}
Q_G\p{3k}=3k-2\\
Q_G\p{3k+1}=3\\
Q_G\p{3k+2}=3k+2.
\end{cases}
\]
(To use the above expression to determine $Q_G\p{n}$, first write $n=3k+r$ where $r=n\bmod3$, and then refer to the appropriate case.)  
Going forward, we say that a sequence consisting of $m$ interleaved constant and linear sequences is \emph{quasilinear} with period~$m$.  In this language, $Q_G$ is quasilinear with period~$3$.  Another notable initial condition to the $Q$-recurrence is $\abk{\bar{0};3,6,5,3,6,8}$.  The resulting sequence is not quasilinear; rather, it is an interleaving of two constant sequences with the Fibonacci sequence~\cite{rusk}.


Previous approaches~\cite{golomb, rusk, con, symbhof, genrusk, gengol, slowtrihof, tanny, hofv, erickson, isgur1, isgur2} have focused on trying to find initial conditions to nested recurrences that produce solutions of a specific form.  In this paper, we instead find predictable solutions to the Hofstadter $Q$-recurrence by specifying the form of the initial condition and determining the behavior of the resulting sequence.  In Section~\ref{s:wd}, we characterize the sequences resulting from the family of initial conditions of the form $\abk{1,2,3,\ldots,N}$, and in Section~\ref{s:sd}, we study the more general initial condition $\abk{\bar{0};1,2,3,\ldots,N}$.  Finally, we suggest some future research directions in Section~\ref{s:future}.

\section{A Family of Dying Sequences}\label{s:wd}

In this section, we consider sequences obtained from the Hofstadter $Q$-recurrence and an initial condition of the form $\abk{1,2,3,\ldots,N}$ for some integer $N\geq2$.  Henceforth, we denote this sequence for a given value of $N$ by $Q_N$.
%

We have the following result, which characterizes the behaviors of almost all of these sequences.

\begin{theorem}\label{thm:1thruN}
For $N=8$, $N=11$, $N=12$, or $N\geq14$, the sequence $Q_N$ dies.  Furthermore, if $N\geq21$, the sequence has exactly $N+28$ terms, and if $14\leq N\leq20$, the sequence has exactly $N+32$ terms.
\end{theorem}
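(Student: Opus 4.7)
The plan is to prove the theorem by direct symbolic simulation of the recurrence, exploiting a simple structural fact: for the initial condition $\abk{1,2,\ldots,N}$ with $N$ not too small, each new term $Q_N(N+k)$ is either an absolute constant or has the form $N + c_k$ for a small integer $c_k$. In other words, every value stays in one of two narrow bands, a low band near $1$ and a high band near $N$.

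I would begin by directly computing the first few new terms, starting with $Q_N(N+1) = Q_N(1) + Q_N(2) = 3$, $Q_N(N+2) = Q_N(N-1) + Q_N(2) = N+1$, $Q_N(N+3) = Q_N(2) + Q_N(N) = N+2$, $Q_N(N+4) = Q_N(2) + Q_N(3) = 5$, treating $N$ as a symbolic parameter. At each step, I would verify that the two arguments $N+k-Q_N(N+k-1)$ and $N+k-Q_N(N+k-2)$ of the recurrence land either in the low region (where $Q_N$ is given by the initial condition $Q_N(i)=i$) or in the high region (which is still inside the initial condition when $k$ is small, and otherwise refers to a previously computed tail value whose form is already known by induction).

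The main technical lemma I would prove by induction on $k$: for $N \geq 21$ and $1 \leq k \leq 28$, the value $Q_N(N+k)$ matches a fixed expression independent of $N$ (either an absolute constant or $N + c_k$), and the two recurrence arguments for $Q_N(N+k)$ evaluate cleanly inside these two bands. For the induction to carry through uniformly in $N$, the low band and the high band $[N - c, N+k-1]$ must remain disjoint throughout $k \leq 28$; the arithmetic of the worst step should pin down $N \geq 21$ as the precise threshold. I would then confirm by direct inspection that at step $k = 29$, one of the recurrence indices becomes nonpositive, so the sequence dies with exactly $N+28$ terms for every $N \geq 21$.

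For $14 \leq N \leq 20$, the two bands collide at some intermediate step and the uniform pattern breaks, but the simulation still terminates; I would handle these seven values individually and verify that each produces exactly $N+32$ terms. The remaining values $N = 8, 11, 12$ reduce to finite direct computations as well. The main obstacle is the bookkeeping through the $28$ generic steps and, separately, through the borderline cases $14 \leq N \leq 20$. Locating the exact step at which the low and high bands first touch, and verifying that $N = 21$ is the first value of $N$ for which the clean pattern survives all $28$ steps, is where the proof carries its weight; the rest is mechanical verification.
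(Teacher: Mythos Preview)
Your approach matches the paper's: compute $Q_N(N+k)$ symbolically in $N$ for $k=1,\ldots,28$, then observe that step $29$ calls for a nonpositive index. Two points where your narrative is slightly off, though neither is fatal. First, the two-band picture holds for $k\leq 27$ but not at $k=28$: there $Q_N(N+28)=Q_N(N+8)+Q_N(N+8)=2N+8$, a sum of two high-band values; it is this doubly-high term that forces $Q_N(N+29)$ to look up $Q_N(-N+21)$, and $-N+21\leq 0$ exactly when $N\geq 21$. Second, and relatedly, the $28$-step symbolic pattern is in fact valid for all $N\geq 13$, so for $14\leq N\leq 20$ the bands do not collide early as you suggest---the pattern runs cleanly to $k=28$, and then $-N+21\in\{1,\ldots,7\}$ lands back in the initial condition, allowing the sequence to continue. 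The paper exploits this to give one unified computation of four further terms (still valid for $N\geq 13$) ending at $Q_N(N+32)=2N+19\geq N+33$, rather than handling the seven cases $14\leq N\leq 20$ individually as you propose.
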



\begin{proof}
It is straightforward to verify by computing terms that $Q_8\p{420}=430$, $Q_{11}\p{199}=206$, and $Q_{12}\p{69}=77$, so these sequences all die~\cite{oeis}.

In general, we can compute terms following the initial condition as a function of the parameter $N$.  First, we compute $Q_N\p{N+1}$.  By the $Q$-recurrence, this equals $Q_N\p{N+1-Q_N\p{N}}+Q_N\p{N+1-Q_N\p{N-1}}$.  Both $Q\p{N}$ and $Q\p{N-1}$ lie in the initial condition, so they equal $N$ and $N-1$ respectively.  This allows us to simplify the expression to $Q_N\p{N+1-N}+Q_N\p{N+1-\p{N-1}}=Q_N\p{1}+Q_N\p{2}$.  Again, we have two terms from the initial condition, so we obtain that $Q_N\p{N+1}=3$.  This calculation is invalid if $N=1$ (as then neither index $2$ nor index $N-1$ would be in the initial condition), but it is valid for any $N\geq2$.

Subsequent terms are computed using a similar process.  Two important notes:
\begin{enumerate}[(a)]
\item The terms arising in the intermediate steps are not always from the initial condition.  
But, if a term is not from the initial condition, we can proceed as long as it lies before the current term.  In that eventuality, we would have already computed it, so we can use its computed value.
\item The calculations at each step are only valid for $N$ sufficiently large.  If a fact of the form $Q\p{i}=i$ is used to simplify an expression for some constant $i$, then we must have $N\geq i$.  Similarly, if a fact of the form $Q\p{N-i}=N-i$ is used, we must have $N>i$.
\end{enumerate}

Using this process, we can compute $28$ terms following the initial condition before we run into any issues.  These $28$ terms are:
\begin{align*}
3,\, & N + 1,\, N + 2,\, 5,\, N + 3,\, 6,\, 7,\, N + 4,\, N + 6,\, 10,\, 8,\, N + 6,\, N + 10,\, 12,\, N + 7,\, 14,\, \\ & N + 12,\, 11,\,N + 11,\, N + 15,\, 16,\, 13,\, 17,\, 15,\, N + 14,\, 20,\, 20,\, 2N + 8.
\end{align*}
See Appendix~\ref{app:Qwd} for explicit computations of these terms, along with a bound on the values of $N$ for which that computation and all previous computations are valid.  In particular, note that the calculations are valid for $N\geq13$.

The last term we have is $Q\p{N+28}=2N+8$.  We try to compute $Q\p{N+29}$:
\begin{align*}
    \mbb{Q_N\p{N+29}}&=Q_N\p{N+29-Q_N\p{N+28}}+Q_N\p{N+29-Q_N\p{N+27}}\\
    &=Q_N\p{N+29-\pb{2N+8}}+Q_N\p{N+29-20}\\
    &=Q_N\pb{-N+21}+Q_N\pb{N+9}.
    \end{align*}
If $N\geq21$, then $-N+21\leq0$, so $Q_N\p{-N+21}$ is undefined and the sequence dies.


This just leaves the values $14\leq N\leq20$ to examine.  This is a finite range, so it suffices to individually check that these sequences all die after $N+32$ terms.  But, these seven sequences all die according to the same pattern, so we give a unifying proof for all of them.  Suppose $14\leq N\leq 20$.  We then have $Q_N\p{-N+21}=-N+21$, as that term now lies in the initial condition.  So, we can continue to compute terms.  All calculations below are only valid if $N\geq13$, which is the case for the range we are considering.
    \begin{align*}
    \mbb{Q_N\p{N+29}}&=Q_N\pb{-N+21}+Q_N\pb{N+9}=-N+21+N+6=\mbb{27}
    \end{align*}
    \begin{align*}
    \mbb{Q_N\p{N+30}}&=Q_N\p{N+30-Q_N\p{N+29}}+Q_N\p{N+30-Q_N\p{N+28}}\\
    &=Q_N\p{N+30-27}+Q_N\p{N+30-\pb{2N+8}}\\
    &=Q_N\pb{N+3}+Q_N\pb{-N+22}=N+2-N+22=\mbb{24}
    \end{align*}
    \begin{align*}
    \mbb{Q_N\p{N+31}}&=Q_N\p{N+31-Q_N\p{N+30}}+Q_N\p{N+31-Q_N\p{N+29}}\\
    &=Q_N\p{N+31-24}+Q_N\p{N+31-27}\\
    &=Q_N\pb{N+7}+Q_N\pb{N+4}=7+5=\mbb{12}
    \end{align*}
    \begin{align*}
    \mbb{Q_N\p{N+32}}&=Q_N\p{N+32-Q_N\p{N+31}}+Q_N\p{N+32-Q_N\p{N+30}}\\
    &=Q_N\p{N+32-12}+Q_N\p{N+32-24}\\
    &=Q_N\pb{N+20}+Q_N\pb{N+8}=N+15+N+4=\mbb{2N+19}.
    \end{align*}
If $N\geq14$, then $2N+19\geq N+33$.  This means that, if $14\leq N\leq20$, then $Q_N\p{N+33}$ fails to exist.  So, $Q_N$ dies after $N+32$ terms whenever $14\leq N\leq20$, as required.
\end{proof}

Theorem~\ref{thm:1thruN} says that $Q_N$ dies for all but finitely many $N$.  This begs the question of what happens when $N\in\st{2,3,4,5,6,7,9,10,13}$.  The sequence $Q_2$ is Hofstadter's sequence without the initial $1$, so it is unknown whether $Q_2$ dies.  Since $Q_2\p{3}=3$, $Q_3=Q_2$, so $N=3$ also gives Hofstadter's sequence.  The remaining $N$ values in this set give sequences that are different from Hofstadter's sequence and different from each other.  Like Hofstadter's, it is unknown whether any of these sequences dies.  All of these sequences last for at least $30$ million terms~\cite{oeis}.

\section{More Complicated Behavior}\label{s:sd}
The sequence in Section~\ref{s:wd} almost all die.  Here, we consider what happens if we prevent them from dying by defining their values to be zero at nonpositive integers.  For an integer $N\geq2$, let $Q_{\bar{N}}$ denote the sequence obtained from the Hofstadter $Q$-recurrence with initial condition $\abk{\bar{0};1,2,3,\ldots,N}$.  

%
%
%
Somewhat surprisingly, the behavior of $Q_{\bar N}$ depends on the congruence class of $N$ modulo~$5$.  
Before delving into details, we describe the high-level structure of these sequences for sufficiently large $N$.  First, the sequences $Q_N$ and $Q_{\bar N}$ agree until $Q_N$ dies.  Shortly after that point, $Q_{\bar N}$ settles into a period-$5$ quasilinear pattern.  Unlike the sequence $Q_G$ mentioned in the introduction, where the quasilinear pattern lasts forever, the period-$5$ behavior of $Q_{\bar N}$ is only temporary.  What happens once it collapses depends on $N$ mod~$5$. 
For three congruence classes, a term in $Q_{\bar N}$ depends on itself shortly after the quasilinear behavior stops, causing the sequence to end there.  In one case, the sequence ends only $4$ terms beyond the end of the quasilinear part.  The other two cases last $11$ and $158$ terms beyond it.  Of the two remaining congruence classes, one of them leads to a seemingly infinite sequence some of whose terms are predictable and others of which appear chaotic.  The other class leads to a period-$5$ quasilinear pattern not unlike the one that stopped shortly before.  Like the original period-$5$ pattern, this one is also temporary.  When it finishes, the same five possible continuations of behavior are possible, with the behavior now dependent on $N$ mod $25$.  Similar period-$5$ chunks appear to be possible to arbitrary depths.

The structure of this section is as follows. In~\ref{ss:rst} we formally introduce the semi-predictable sequences discussed above.  Then, in~\ref{ss:main}, we formally state and prove Theorem~\ref{thm:main}, which fully describes the structure of $Q_{\bar N}$.  Then,~\ref{ss:disc} is devoted to a further discussion, in plain language, of the consequences of Theorem~\ref{thm:main}.  
Finally, 
a discussion of the remaining cases, when $N$ is not sufficiently large, is carried out in~\ref{ss:sporadic}.

\subsection{Interlude:\ A Family of Semi-Predictable Solutions}\label{ss:rst}
In order to fully characterize the sequences $Q_{\bar N}$, it is necessary to describe a peculiar family of sequences that satisfy Hofstadter's recurrence.  
Historically, solutions to Hofstadter-like recurrences have looked one of the following:
\begin{enumerate}[(a)]
\item\label{it:die} Finite (dying/ending) sequences (e.g.\ $Q_{70}$)
\item\label{it:hof} Apparently infinite sequences with seemingly chaotic behavior, though perhaps with some detectable patterns (e.g.\ the Hofstadter $Q$-sequence)
\item\label{it:gol} A sequence satisfying a linear recurrence relation (e.g.\ $Q_G$)
\item\label{it:slow} A monotone increasing sequence with successive differences $0$ or $1$ (e.g.\ Tanny's sequence~\cite{tanny})
\end{enumerate}

Now, we describe a family of solutions to the $Q$-recurrence that does not fall cleanly into the above classification, instead combining elements of cases~(\ref{it:hof}) and~(\ref{it:gol}).  In particular, the solutions are interleavings of five sequences.  Four of them are chaotic and seemingly infinite, and the fifth is a constant sequence.


As auxiliary objects, we define three sequences in terms of a system of nested recurrences:
\begin{defin}\label{def:rst}
Define sequences $R\p{n}$, $S\p{n}$, and $T\p{n}$ as follows:
\begin{itemize}
\item $R\p{n}=0$ for $n\leq0$, $R\p{1}=1$, $R\p{2}=2$, $R\p{n}=R\p{n-R\p{n-1}}+S\p{n-1}$ for $n\geq3$
\item $S\p{n}=0$ for $n<0$, $S\p{0}=1$, $S\p{1}=1$, $S\p{n}=S\p{n-R\p{n}}+S\p{n-R\p{n-1}}$ for $n\geq2$
\item $T\p{n}=0$ for $n<0$, $T\p{0}=1$, $T\p{n}=T\p{n-R\p{n}}+T\p{n-S\p{n}}$ for $n\geq1$
\end{itemize}
\end{defin}
Plots of these sequences are given in Figures~\ref{fig:r},~\ref{fig:s}, and~\ref{fig:t} respectively.  All of them appear to behave fairly chaotically, and, much like the $Q$-sequence, it is unknown whether or not they end.  According to the plots, $R$ and $S$ appear to grow approximately linearly, whereas $T$ appears to grow superlinearly.
\begin{figure}
\begin{center}
\includegraphics[width=300pt]{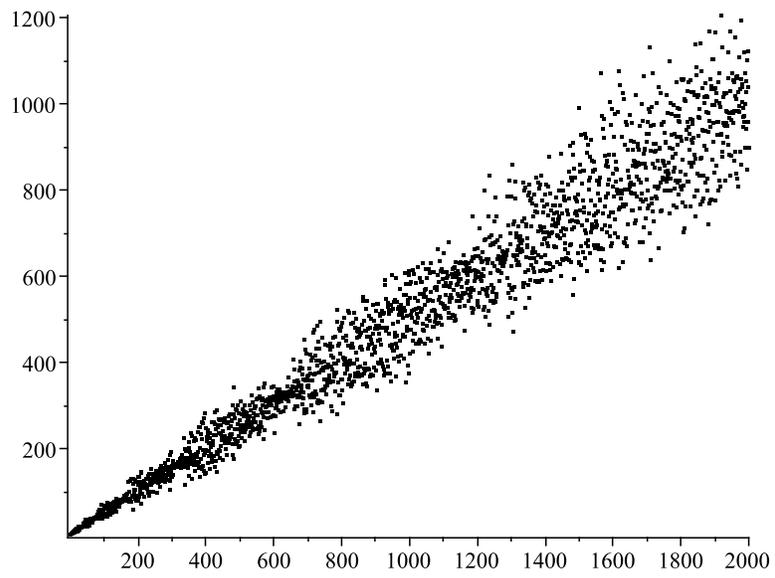}
\end{center}
\caption{Plot of $R\p{1}$ through $R\p{2000}$}
\label{fig:r}
\end{figure}
\begin{figure}
\begin{center}
\includegraphics[width=300pt]{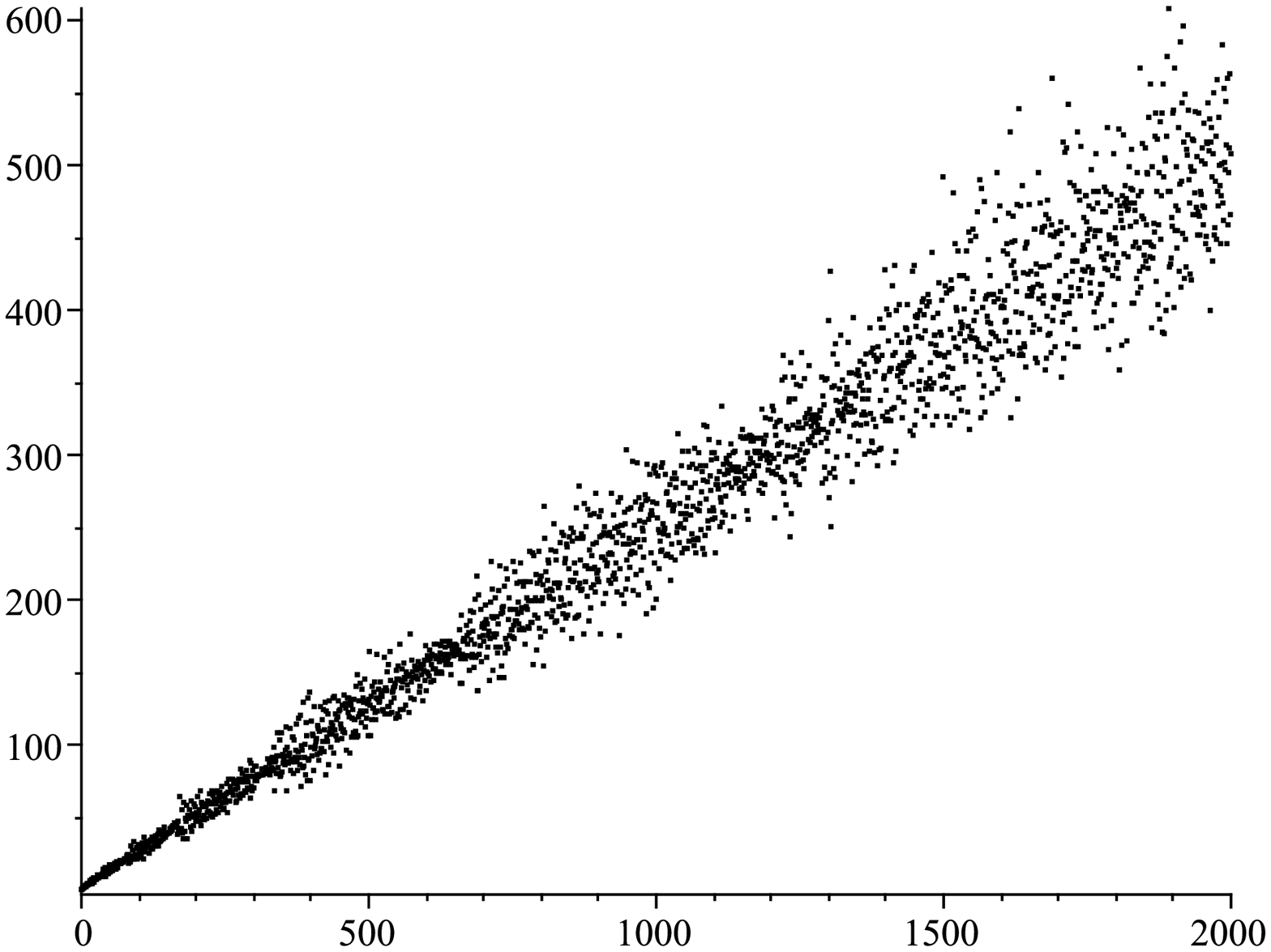}
\end{center}
\caption{Plot of $S\p{0}$ through $S\p{2000}$}
\label{fig:s}
\end{figure}
\begin{figure}
\begin{center}
\includegraphics[width=300pt]{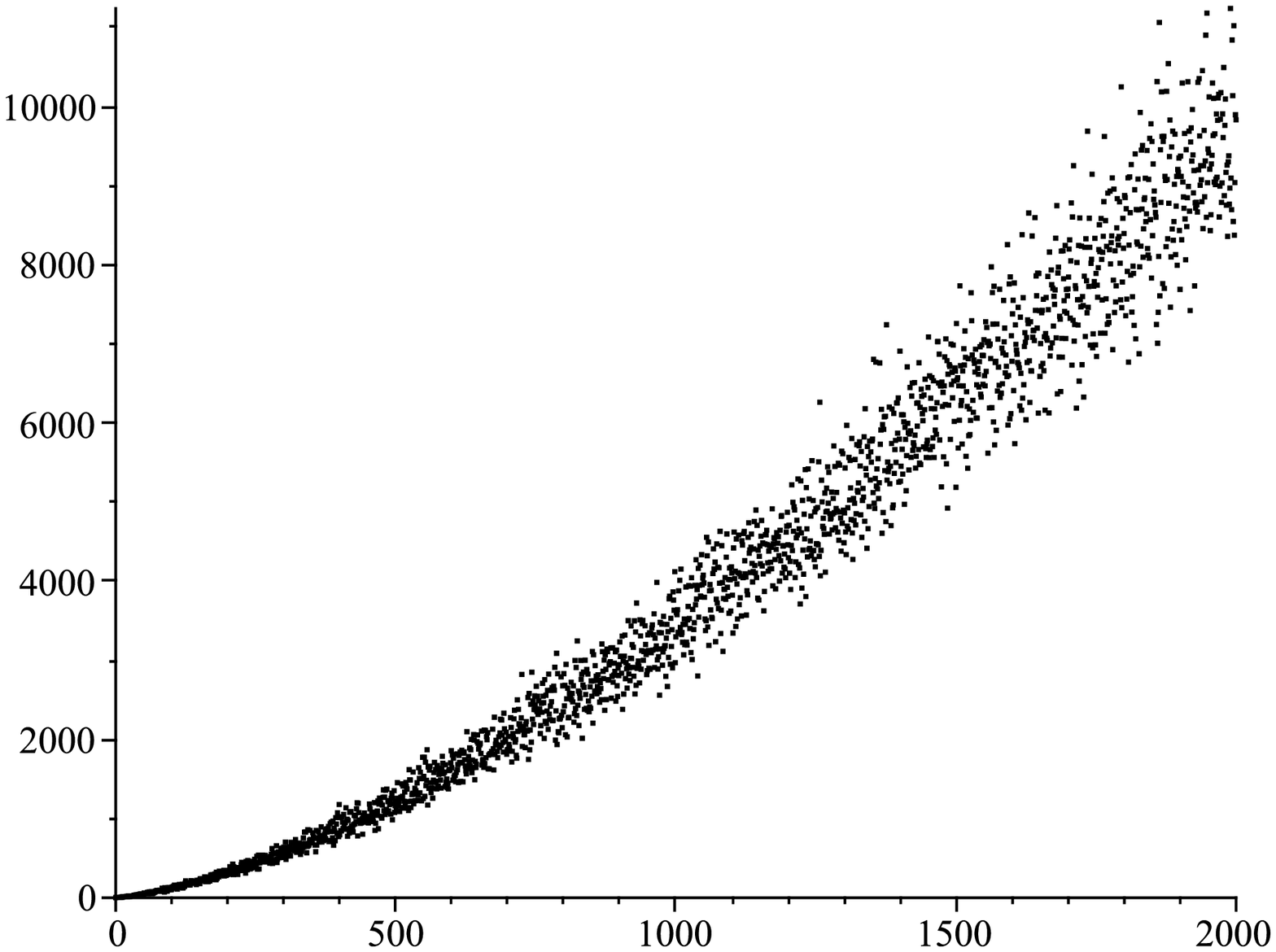}
\end{center}
\caption{Plot of $T\p{0}$ through $T\p{2000}$}
\label{fig:t}
\end{figure}

Multiples of the $R$, $S$, and $T$ sequences can appear as equally-spaced subsequences of solutions to the Hofstadter $Q$-recurrence.  We denote such solutions by $Q_T$.  Proposition~\ref{prop:rst} describes a parametrized family of such solutions.  All solutions in this family eventually consist of five interleaved subsequences: two multiples of $R$, one multiple each of $S$ and $T$, and a sequence of all fours.  In the next section, we will see that some of the sequences $Q_{\bar N}$ are eventually characterized by Proposition~\ref{prop:rst}.
\begin{prop}\label{prop:rst}
Let $K\geq0$, $\As\geq 9$ and $\Bs\geq K+6$ be integers.  The initial condition $\abk{\bar{0};a_1,a_2,\ldots,a_K,5,\As,4,\Bs}$ (each $a_i$ an arbitrary integer) for the Hofstadter $Q$-recurrence generates the following pattern, beginning with index $K+5$ (the first case, with $k=1$),
\[
\begin{cases}
Q_T\p{K+5k}=5R\p{k}\\
Q_T\p{K+5k+1}=5S\p{k}\\
Q_T\p{K+5k+2}=\As T\p{k}\\
Q_T\p{K+5k+3}=4\\
Q_T\p{K+5k+4}=5R\p{k}.
\end{cases}
\]
The pattern lasts as long as the $R$, $S$, and $T$ sequences live and as long as $\As T\p{k}\geq K+5k+4$.
\end{prop}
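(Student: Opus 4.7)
The plan is to proceed by strong induction on $k$, proving all five equations of the pattern simultaneously at each step.

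For the base case $k = 1$, I would compute $Q_T(K+5), \ldots, Q_T(K+9)$ directly from the $Q$-recurrence and the initial condition. Every argument referenced either lies in the tail of the initial condition (the values $5, \As, 4, \Bs$ at positions $K+1$ through $K+4$) or is non-positive and evaluates to $0$ by the $\bar{0}$ convention. The hypotheses $\As \geq 9$ and $\Bs \geq K+6$ are precisely what is needed to push the unwanted arguments below $1$, so none of the $a_i$ are ever actually touched. The resulting values $5, 5, 2\As, 4, 5$ match $5R(1), 5S(1), \As T(1), 4, 5R(1)$, using the initial values $R(1) = S(1) = 1$ and $T(1) = T(0) + T(0) = 2$. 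Because the $R$-recurrence activates only at $n \geq 3$, I would dispatch $k = 2$ by direct computation as well, before entering the general inductive step.

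For the inductive step, assume the pattern holds through step $k$. For each residue $r \in \{0,1,2,3,4\}$, set $n = K + 5(k+1) + r$ and apply $Q_T(n) = Q_T(n - Q_T(n-1)) + Q_T(n - Q_T(n-2))$. Substituting the inductive values of $Q_T(n-1)$ and $Q_T(n-2)$, and then invoking the inductive hypothesis again on the two resulting lookups, the right-hand side collapses into the defining recurrence for $R$ when $r = 0$, for $S$ when $r = 1$, and for $T$ when $r = 2$. For $r = 3$, one summand is the initial-condition value $4$ at position $K+3$, reached via a four-step backstep from $5S(k+1)$; for $r = 4$, the corresponding four-step backstep lands on $Q_T(K + 5(k+1)) = 5R(k+1)$, which was just established. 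In both $r = 3$ and $r = 4$, the other summand has argument $n - \As T(k+1)$, and the hypothesis $\As T(k+1) \geq K + 5(k+1) + 4$ is exactly what pushes that argument below $1$, killing the summand.

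The main obstacle is the index bookkeeping. At each step I must verify (i) that the shifted indices $n - Q_T(n-1)$ and $n - Q_T(n-2)$ fall into residues and ranges where the inductive hypothesis applies; (ii) that the smaller arguments appearing inside $R$, $S$, and $T$, namely $k+1 - R(k)$, $k+1 - R(k+1)$, and $k+1 - S(k+1)$, land in the regime where the full defining recurrences (as opposed to the $n \leq 2$ seed values) are the relevant ones, and stay positive so that the inductive hypothesis for $Q_T$ continues to apply; and (iii) in the base cases, that the crossing between the initial condition and the pattern region is clean. Nothing here is conceptually deep: the content of the proof is the clean correspondence between the $Q$-recurrence on the five interleaved subsequences and the defining recurrences of $R$, $S$, and $T$, and the work is simply the careful case analysis over the five residues and their boundary cases.
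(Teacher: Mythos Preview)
Your approach is essentially the paper's: both argue by induction (you on $k$, the paper on the index $n$) with a short base case followed by the same five-residue case analysis, using the hypothesis $\As T(k)\geq K+5k+4$ to kill one summand in the $r=3$ and $r=4$ cases. One correction to your $r=3$ sketch: the backstep by $5S(k+1)$ lands at position $K+5\bigl(k+1-S(k+1)\bigr)+3$, not at $K+3$, so the value $4$ there comes from the inductive hypothesis in general (and from the initial condition only when $S(k+1)=k+1$); your separate $k=2$ base case is harmless but unnecessary, since the defining recurrences for $R$, $S$, $T$ are consistent with their seed values.
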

One may wonder how restrictive the condition $\As T\p{k}\geq K+5k+4$ is. Since the $T$-sequence appears to grow superlinearly, it should be satisfied by sufficiently large $\As$ for fixed $K$.  
In particular, $\As=9$ seems to suffice for $K=0$, lasting for at least fifty million terms (and $\As=8$ fails within the first $60$ terms)~\cite{oeis}. 
The case $K=0$, $\As=9$ and $\Bs=6$ is depicted in Figure~\ref{fig:rst}.

\begin{figure}
\begin{center}
\includegraphics[width=300pt]{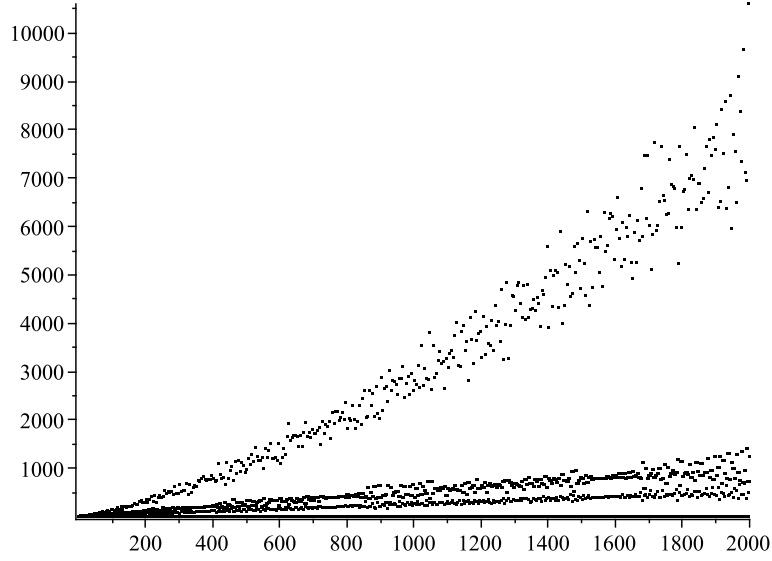}
\end{center}
\caption{The first $2000$ terms of $Q_T$ with initial condition $\abk{\bar{0};5,9,4,6}$}
\label{fig:rst}
\end{figure}

We now prove Proposition~\ref{prop:rst}.
\begin{proof}
The proof is by induction on $n$.  As a base case, we first manually check $n=K+5$ through $n=K+8$.
\begin{itemize}
\item $Q_T\p{K+5}=Q_T\p{K+5-\Bs}+Q_T\p{K+5-4}=Q_T\p{K+1}=5=5R\p{1}$.
\item $Q_T\p{K+6}=Q_T\p{K+6-5}+Q_T\p{K+6-\Bs}=Q_T\p{K+1}=5=5S\p{1}$.
\item $Q_T\p{K+7}=Q_T\p{K+7-5}+Q_T\p{K+7-5}=2Q_T\p{K+2}=2\As=\As T\p{1}$.
\item $Q_T\p{K+8}=Q_T\p{K+8-2\As}+Q_T\p{K+8-5}=Q_T\p{K+3}=4$.
\end{itemize}
(Note that $K+8-2\As\leq0$ because $\As T\p{1}=2\As\geq K+5+4=K+9$.)  We now proceed by induction on $n$ for $n\geq K+9$.  There are $5$ cases to consider.
\begin{description}
\item[$n-K\equiv0\pb{\modd 5}$:] Here, $n=K+5k$ for some $k\geq2$.  We have
\begin{align*}
Q_T\p{K+5k}&=Q_T\p{K+5k-Q_T\p{K+5k-1}}\\
&\hspace{0.17in}+Q_T\p{K+5k-Q_T\p{K+5k-2}}\\
&=Q_T\p{K+5k-5R\p{k-1}}+Q_T\p{K+5k-4}\\
&=5R\p{k-R\p{k-1}}+5S\p{k-1}\\
&=5R\p{k},
\end{align*}
as required.
\item[$n-K\equiv1\pb{\modd 5}$:] Here, $n=K+5k+1$ for some $k\geq2$.  We have
\begin{align*}
Q_T\p{K+5k+1}&=Q_T\p{K+5k+1-Q_T\p{K+5k}}\\
&\hspace{0.17in}+Q_T\p{K+5k+1-Q_T\p{K+5k-1}}\\
&=Q_T\p{K+5k+1-5R\p{k}}+Q_T\p{K+5k+1-5R\p{k-1}}\\
&=5S\p{k-R\p{k}}+5S\p{k-R\p{k-1}}\\
&=5S\p{k},
\end{align*}
as required.
\item[$n-K\equiv2\pb{\modd 5}$:] Here, $n=K+5k+2$ for some $k\geq2$.  We have
\begin{align*}
Q_T\p{K+5k+2}&=Q_T\p{K+5k+2-Q_T\p{K+5k+1}}\\
&\hspace{0.17in}+Q_T\p{K+5k+2-Q_T\p{K+5k}}\\
&=Q_T\p{K+5k+2-5S\p{k}}+Q_T\p{K+5k+2-5R\p{k}}\\
&=\As T\p{k-S\p{k}}+\As T\p{k-R\p{k}}\\
&=\As T\p{k},
\end{align*}
as required.
\item[$n-K\equiv3\pb{\modd 5}$:] Here, $n=K+5k+3$ for some $k\geq2$.  We have
\begin{align*}
Q_T\p{K+5k+3}&=Q_T\p{K+5k+3-Q_T\p{K+5k+2}}\\
&\hspace{0.17in}+Q_T\p{K+5k+3-Q_T\p{K+5k+1}}\\
&=Q_T\p{K+5k+3-\As T\p{k}}+Q_T\p{K+5k+3-5S\p{k}}\\
&=0+4\\
&=4,
\end{align*}
as required.
\item[$n-K\equiv4\pb{\modd 5}$:] Here, $n=K+5k+4$ for some $k\geq1$.  We have
\begin{align*}
Q_T\p{K+5k+4}&=Q_T\p{K+5k+4-Q_T\p{K+5k+3}}\\
&\hspace{0.17in}+Q_T\p{K+5k+4-Q_T\p{K+5k+2}}\\
&=Q_T\p{K+5k+4-4}+Q_T\p{K+5k+4-\As T\p{k}}\\
&=Q_T\p{K+5k}+0\\
&=5R\p{k},
\end{align*}
as required.
\end{description}
What assumptions do we make about $\As$ and $\Bs$?  When computing $Q_T\p{K+6}$, we require $\Bs\geq K+6$.  After this, $\Bs$ never appears again.  For $\As$, when computing $Q_T\p{K+5k+3}$ we need $\As T\p{k}\geq K+5k+4$ for every $k$, as required.
\end{proof}

Aside from the existence of the solutions described in Proposition~\ref{prop:rst} and the single application to the sequences $Q_{\bar N}$, little is know about these and related semi-predictable solutions to the $Q$-recurrence.  A preliminary exploration can be found in~\cite{thesis}.

\subsection{Structure Theorem for $Q_{\bar N}$}\label{ss:main}
In this section, we formally state and prove a theorem (Theorem~\ref{thm:main}) that describes the full behavior of all but finitely many of the sequences $Q_{\bar N}$, modulo open questions about whether or not the sequences in~\ref{ss:rst} are infinite.  The theorem has many parts, all of which have some substance.  Because of the length and amount of technical details in this theorem, we state both a short version and a long version.

Before we state Theorem~\ref{thm:main}, we introduce some auxiliary sequences.
\begin{defin}
Fix an integer $N$.  Define $A_0=N-2$, $A_1=2N+4$,
$
B_1=-11N-22$, and $C_1=\p{N-1}\bmod 5$.  
Then, for $i\geq2$, define
\[
A_{i+1}=A_i\pb{\frac{A_i-A_{i-1}+2}{5}}+B_i,
\]
\[
B_{i+1}=A_{i+1}-A_i,
\]
and
\[
C_i=\p{A_i+2i+1}\bmod 5.
\]
Finally, for all $i\geq1$, define
\[
C'_i=\max\p{0,\p{\p{3-C_i}\modd 5}-1}.
\]
\end{defin}

Note that $A_i$ is not guaranteed to always be an integer, but it is an integer whenever we use it (a fact that is guaranteed by Propopsition~\ref{prop:j5p} on p.~\pageref{prop:j5p}).  We now state our main theorem.
\begin{theorem}[Short Version]
Let $N$ be 
a 
natural number.  
Let $j$ be the first index where $C_j\neq1$ (or $j=\infty$ if $C_j=1$ for all $j$).  Provided $N\geq35$, the sequence $Q_{\bar N}$ has the following structure:
\begin{enumerate}[(a)]
\item For all $1\leq i\leq N$, $Q_{\bar N}\p{i}=i$.
\item The $28$ terms following the initial conditions are the remaining $28$ terms of $Q_N$ (see Appendix~\ref{app:Qwd}).  The sequence then contains six sporadic terms, which are then followed by a quasilinear chunk with period~$5$ that lasts through index $A_1+C'_1$.
\item For each $1\leq m<j$, the previous quasilinear chunk is followed by five sporadic terms and then another quasilinear chunk with period~$5$ that lasts through index $A_{m+1}+C'_{m+1}$.
\item If $C_j=0$ and $N\geq118$, then $Q_{\bar N}$ is finite, and it contains $158$ terms after the last quasilinear chunk concludes.
\item If $C_j=2$, then the behavior of the rest of the sequence is described by Proposition~\ref{prop:rst}, where the initial condition in that proposition is given by the already-generated terms along with the two next terms.
\item If $C_j=3$, then $Q_{\bar N}$ is finite, and it contains $4$ terms after the last quasilinear chunk concludes.
\item If $C_j=4$, then $Q_{\bar N}$ is finite, and it contains $11$ terms after the last quasilinear chunk concludes.
\end{enumerate}
\end{theorem}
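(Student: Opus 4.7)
The plan is to prove all parts of the theorem by a single strong induction on the sequence index $n$, with the argument organized around the natural decomposition of $Q_{\bar N}$ into an initial block, a chain of alternating sporadic transitions and period-$5$ quasilinear chunks, and a tail whose behavior is dictated by $C_j$. Part (a) is immediate from the initial condition $\abk{\bar{0}; 1, 2, \ldots, N}$. For the first half of part (b), I would observe that the $28$-term calculation in the proof of Theorem~\ref{thm:1thruN} only references indices in $\{1, \ldots, N+28\}$ and never invokes the zero extension, so those $28$ terms of $Q_{\bar N}$ agree with $Q_N$ verbatim. The six sporadic terms $Q_{\bar N}(N+29), \ldots, Q_{\bar N}(N+34)$ and the initial alignment of the first quasilinear chunk can then be computed by direct unfolding of the $Q$-recurrence, treating each reference to a nonpositive index as $0$; this produces explicit polynomial expressions in $N$ from which the base-case values $A_0 = N-2$, $A_1 = 2N+4$, $B_1 = -11N - 22$, $C_1 = (N-1) \bmod 5$ emerge naturally.

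The heart of the argument is to verify each period-$5$ quasilinear chunk and characterize how it transitions to the next. A chunk parametrized by $A_i, B_i$ consists of five interleaved arithmetic progressions, one per residue class of $n - N \pmod{5}$. I would establish the chunk by strong induction on $n$: in each residue class, I would evaluate the $Q$-recurrence and show that the two needed predecessors $n - Q_{\bar N}(n-1)$ and $n - Q_{\bar N}(n-2)$ lie in already-characterized ranges (earlier parts of the current chunk, the preceding sporadic terms, or the previous chunk) with values matching the claimed formulas. A careful case analysis pinpoints the exact index at which the recurrence first fails to reproduce the predicted formula; this index is $A_i + C'_i$, with the offset $C'_i = \max(0, ((3 - C_i) \bmod 5) - 1)$ encoding how the residue $A_i \bmod 5$ aligns with the five-term cycle. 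Proposition~\ref{prop:j5p} would be invoked here to guarantee that the $A_i$ appearing in these expressions are always integers.

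After a chunk ends, the five terms $Q_{\bar N}(A_i + C'_i + 1), \ldots, Q_{\bar N}(A_i + C'_i + 5)$ are another batch of sporadic terms, each computed directly as a polynomial in $N$ and $A_i$. The recursive updates $A_{i+1} = A_i \lfloor (A_i - A_{i-1} + 2)/5 \rfloor + B_i$ and $B_{i+1} = A_{i+1} - A_i$ then arise naturally from the form of these expressions. The value $C_{i+1} \bmod 5$ selects from five possible continuations: when $C_{i+1} = 1$, the terms match the start of a new period-$5$ quasilinear chunk with parameters $A_{i+1}, B_{i+1}$, giving the inductive step for part (c); when $C_{i+1} = 2$, they match the initial condition $\abk{\bar{0}; a_1, \ldots, a_K, 5, \As, 4, \Bs}$ required by Proposition~\ref{prop:rst}, which completes the proof of part (e); and when $C_{i+1} \in \{0, 3, 4\}$, a short direct computation of length $158$, $4$, or $11$ respectively shows the sequence hits an undefined or self-referential lookup, yielding parts (d), (f), (g).

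The main obstacle I anticipate is the bookkeeping required within each chunk to ensure that both lookups $n - Q_{\bar N}(n-1)$ and $n - Q_{\bar N}(n-2)$ fall in previously-computed ranges with the predicted values. This is subtle because near the start and end of a chunk, the lookups can straddle a chunk boundary or a sporadic region, and because the linear growth rates of the five interleaved subsequences interact with $A_i$ in ways that force careful case splits on $C_i \bmod 5$. The hypothesis $N \geq 35$ (and the sharper $N \geq 118$ for part (d)) should enter exactly to guarantee that all polynomial quantities in these lookups have the expected signs and that the first chunk is long enough to make the recursive parametrization well-defined; verifying these inequalities uniformly across all five cases of $C_{i+1}$ and all $1 \leq m < j$ is the routine but substantial portion of the work.
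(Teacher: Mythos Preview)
Your proposal is correct and follows essentially the same route as the paper: direct computation of the initial $28+6$ post-initial-condition terms, an inductive verification of each period-$5$ quasilinear chunk residue-by-residue, direct computation of the sporadic transition terms that produce the $A_{i+1}$, $B_{i+1}$ update, and a case split on $C_j$ invoking Proposition~\ref{prop:rst} for $C_j=2$ and finite tail computations for $C_j\in\{0,3,4\}$. The only organizational difference is that the paper factors the period-$5$ chunk verification into a standalone Lemma~\ref{lem:5cyc} (applied once for the first chunk and again in the inductive step for subsequent chunks), whereas you propose to carry out that same five-case induction inline; also note that the paper's recursion for $A_{i+1}$ uses plain division rather than a floor, with integrality handled separately.
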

\setcounter{theorem}{1}
\begin{theorem}[Full Version]\label{thm:main}
Let $N$ be 
a 
natural number.  
Let $j$ be the first index where $C_j\neq1$ (or $j=\infty$ if $C_j=1$ for all $j$).  Provided $N\geq35$, the sequence $Q_{\bar N}$ has the following structure:
\begin{enumerate}[(a)]
\item\label{it:pa} For all $1\leq i\leq N$, $Q_{\bar N}\p{i}=i$.
\item\label{it:pb} For $1\leq k\leq 28$, $Q_{\bar N}\p{N+k}=Q_N\p{N+k}$ (see Appendix~\ref{app:Qwd}).  The next six terms are $Q_{\bar N}\p{N+29}=N+6$, $Q_{\bar N}\p{N+30}=24$, $Q_{\bar N}\p{N+31}=32$, $Q_{\bar N}\p{N+32}=2N+4$, $Q_{\bar N}\p{N+33}=3$, $Q_{\bar N}\p{N+34}=32$.  Thereafter, for $35\leq 5k+r\leq A_1+C'_1$  with $0\leq r<5$,
\[
\begin{cases}
Q_{\bar N}\p{N+5k}=A_1 k+B_1\\
Q_{\bar N}\p{N+5k+1}=5\\
Q_{\bar N}\p{N+5k+2}=A_1\\
Q_{\bar N}\p{N+5k+3}=3\\
Q_{\bar N}\p{N+5k+4}=5.
\end{cases}
\]
\item\label{it:p1} For each $1\leq m<j$, $Q_{\bar N}\p{A_m+2}=5$, $Q_{\bar N}\p{A_m+3}=8$, $Q_{\bar N}\p{A_m+4}=A_{m+1}$, $Q_{\bar N}\p{A_m+5}=3$, $Q_{\bar N}\p{A_m+6}=8$, and for all $7\leq 5k+r\leq A_{m+1}+C'_{m+1}$ with $0\leq r<5$,
\[
\begin{cases}
Q_{\bar N}\p{A_m+5k}=3\\
Q_{\bar N}\p{A_m+5k+1}=5\\
Q_{\bar N}\p{A_m+5k+2}=A_{m+1}k+B_{m+1}\\
Q_{\bar N}\p{A_m+5k+3}=5\\
Q_{\bar N}\p{A_m+5k+4}=A_{m+1}.
\end{cases}
\]
\item\label{it:p0} If $C_j=0$ and $N\geq118$, then $Q_{\bar N}$ ends after $A_j+160$ terms.  
See Appendix~\ref{app:160} for the remaining $158$ terms.
\item\label{it:p2} If $C_j=2$, then $Q_{\bar N}\p{A_j+1}=4$, $Q_{\bar N}\p{A_j+2}=A_j\pb{\frac{A_j-A_{j-1}-4}{5}}+B_j+2$, and thereafter, for $5k+r\geq3$ with $0\leq r<5$
\[
\begin{cases}
Q_{\bar N}\p{A_j+5k}=A_j T\p{k}\\
Q_{\bar N}\p{A_j+5k+1}=4\\
Q_{\bar N}\p{A_j+5k+2}=5 R\p{k}\\
Q_{\bar N}\p{A_j+5k+3}=5 R\p{k+1}\\
Q_{\bar N}\p{A_j+5k+4}=5 S\p{k+1}
\end{cases}
\]
assuming the $R$, $S$, and $T$ sequences from~\ref{ss:rst} last forever and assuming that, for all $k\geq1$,
\[
T\p{k}\geq1+\frac{5k+2}{A_j}.
\]
\item\label{it:p3} If $C_j=3$, then $Q_{\bar N}$ ends after $A_j+4$ terms.  The remaining $4$ terms are:
\begin{itemize}
\item $Q_{\bar N}\p{A_j+1}=6$
\item $Q_{\bar N}\p{A_j+2}=A_j+5$
\item $Q_{\bar N}\p{A_j+3}=A_j\pb{\frac{A_j-A_{j-1}-5}{5}}+B_j$
\item $Q_{\bar N}\p{A_j+4}=0$
\end{itemize}
\item\label{it:p4} If $C_j=4$, then $Q_{\bar N}$ ends after $A_j+14$ terms.  The remaining $11$ terms are:
\begin{multicols}{2}
\begin{itemize}
\item $Q_{\bar N}\p{A_j+4}=7$
\item $Q_{\bar N}\p{A_j+5}=A_j+5$
\item $Q_{\bar N}\p{A_j+6}=4$
\item $Q_{\bar N}\p{A_j+7}=A_j+2$
\item $Q_{\bar N}\p{A_j+8}=13$
\item $Q_{\bar N}\p{A_j+9}=A_j\pb{\frac{A_j-A_{j-1}-6}{5}}+B_j+7$
\item $Q_{\bar N}\p{A_j+10}=5$
\item $Q_{\bar N}\p{A_j+11}=4$
\item $Q_{\bar N}\p{A_j+12}=A_j+15$
\item $Q_{\bar N}\p{A_j+13}=A_j\pb{\frac{A_j-A_{j-1}-6}{5}}+B_j+7$
\item $Q_{\bar N}\p{A_j+14}=0$
\end{itemize}
\end{multicols}
\end{enumerate}
\end{theorem}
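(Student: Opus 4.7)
The plan is to prove the theorem by a long case-split followed by nested inductions, taking the seven parts in order. Part~(\ref{it:pa}) is immediate from the initial condition $\abk{\bar{0};1,2,\ldots,N}$. For part~(\ref{it:pb}), I would first observe that the $28$ computations from the proof of Theorem~\ref{thm:1thruN} never appeal to a nonpositive-index value, so the extension $Q_{\bar N}(n)=0$ for $n\leq 0$ plays no role there and those terms coincide with $Q_N(N+k)$. The six sporadic terms $Q_{\bar N}(N+29),\ldots,Q_{\bar N}(N+34)$ I would compute directly from the $Q$-recurrence, this time invoking the zero extension at precisely the place where $Q_N$ died. The ensuing quasilinear chunk would then be established by induction on $k$ in the five-residue style of Proposition~\ref{prop:rst}: each residue class reduces the recurrence to a lookup of already-known terms, with the hypothesis $N\geq 35$ ensuring that all referenced indices are either in the pre-chunk range or (by design) nonpositive.

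Part~(\ref{it:p1}) is proved by an outer induction on $m$ with an inner induction on $k$. Assuming the $m$th chunk has been established up through index $A_m+C'_m$, I would compute the five transition terms at indices $A_m+2,\ldots,A_m+6$ directly. The key point is that $A_{m+1}$ and $B_{m+1}$ were defined precisely so that the claimed next quasilinear formula is self-consistent with these transition terms as its base case. The inner induction then establishes the new chunk by another five-residue case split; the one nontrivial part is verifying that the chunk breaks down exactly at index $A_{m+1}+C'_{m+1}$, which amounts to tracking when the recurrence arguments $n-Q_{\bar N}(n-1)$ and $n-Q_{\bar N}(n-2)$ first fail to land on a value of the claimed form. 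This is where the bookkeeping sequences $C_i$ and $C'_i$ earn their keep.

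Parts~(\ref{it:p0})--(\ref{it:p4}) are a case split on $C_j\in\{0,2,3,4\}$, each consisting of a bounded number of direct $Q$-recurrence computations once the preceding chunk has been established. For~(\ref{it:p3}) and~(\ref{it:p4}) the sequence ends because a computed term forces an evaluation with a zero intermediate value, making the next term self-referential. Part~(\ref{it:p0}) follows the same pattern but requires $158$ transitional computations, for which I would appeal to Appendix~\ref{app:160}; the hypothesis $N\geq 118$ is needed to prevent any of these intermediate arguments from collapsing early. For part~(\ref{it:p2}), after computing $Q_{\bar N}(A_j+1)=4$ and $Q_{\bar N}(A_j+2)$ by hand, I would verify that the most recent terms before this point fit the template $\abk{\bar{0};a_1,\ldots,a_K,5,\As,4,\Bs}$ of Proposition~\ref{prop:rst} with $\As=A_j$ and $\Bs$ read off from the end of the previous chunk, and then invoke that proposition to describe the remainder.

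The main obstacle is the sheer volume of bookkeeping: at each inductive step, both arguments $n-Q_{\bar N}(n-1)$ and $n-Q_{\bar N}(n-2)$ must be shown to land at a position whose value is already known and of the predicted form. This is most delicate at chunk boundaries, where one argument may reach back into the previous chunk's tail, into the sporadic transition terms, or even into the initial condition; the lower bound $N\geq 35$ is presumably tight precisely because smaller $N$ allows these lookups to overlap pathologically. A secondary difficulty is carrying along, as part of the inductive hypothesis, the implicit integrality claim that $A_i-A_{i-1}+2$ is divisible by $5$ at each step (so that $A_{i+1}$ is an integer); this should fall out by inspecting the residues of the boundary computations and will need to be recorded alongside the main inductive statement.
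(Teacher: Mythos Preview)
Your proposal is correct and follows essentially the same approach as the paper: establish parts~(\ref{it:pa}) and~(\ref{it:pb}) by direct computation, prove part~(\ref{it:p1}) by an outer induction on $m$ with an inner five-residue induction for the quasilinear chunk, and handle parts~(\ref{it:p0})--(\ref{it:p4}) by a case split on $C_j$ with direct computation (invoking Appendix~\ref{app:160} for $C_j=0$ and Proposition~\ref{prop:rst} for $C_j=2$). The only organizational difference is that the paper factors your inner five-residue induction into a standalone Lemma~\ref{lem:5cyc}, which also packages the chunk-endpoint calculation (your ``breaks down exactly at $A_{m+1}+C'_{m+1}$'' step) as a uniform formula for $\nu$, and then applies this lemma once in part~(\ref{it:pb}) and once per $m$ in part~(\ref{it:p1}); this is cleaner bookkeeping but otherwise equivalent to the inlined inductions you describe.
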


It is worth noting that the condition
\[
T\p{k}\geq1+\frac{5k+2}{A_j}
\]
in part~(\ref{it:p2}) of Theorem~\ref{thm:main} is almost certainly not necessary, since the $T$-sequence appears to grow superlinearly and $A_j$ is always at least $80$ (and often much larger).

The proof of Theorem~\ref{thm:main} requires the following lemma, which is of a similar flavor to Proposition~\ref{prop:rst}.
\begin{lemma}\label{lem:5cyc}
Let $K\geq0$ be an integer, and let $\As$ and $\Bs$ be any integers satisfying $\As>K+5$ and 
$\As+\Bs>K+6$.  Let $\nu=\max\p{0,\p{\p{K+4-\As}\modd5}-1}$.  
Then, for arbitrary integers $a_1,a_2,\ldots,a_K$, denote the sequence resulting from the Hofstadter $Q$-recurrence and the 
initial condition $\abk{\bar{0};a_1,a_2,\ldots,a_K,\Bs,5,\As,3}$ by $Q_C$.  
The sequence $Q_C$ follows the following pattern from $Q_C\p{K+1}$ through $Q_C\p{\As+\nu}$:
\[
\begin{cases}
Q_C\p{K+5k}=5\\
Q_C\p{K+5k+1}=\As k+\Bs\\
Q_C\p{K+5k+2}=5\\
Q_C\p{K+5k+3}=\As\\
Q_C\p{K+5k+4}=3.
\end{cases}
\]
%
\end{lemma}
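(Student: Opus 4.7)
The plan is to prove the pattern by induction on $n$, with a five-way case split on the residue $r = (n-K) \bmod 5$. The base cases $n = K+1, K+2, K+3, K+4$ are immediate, since they are the last four initial-condition entries $\mu, 5, \lambda, 3$, coinciding with the $k = 0$ instances of the pattern. For the inductive step at $n = K + 5k + r$ with $k \geq 1$, I apply $Q_C(n) = Q_C(n - Q_C(n-1)) + Q_C(n - Q_C(n-2))$ and substitute the inductively known values of $Q_C(n-1)$ and $Q_C(n-2)$, each lying one or two steps earlier in the pattern.

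Three of the five cases ($r = 1, 2, 4$) are routine: both summands land on earlier pattern terms and combine algebraically to the claimed value. In the two remaining cases, the computation reduces to a single pattern term plus an auxiliary $Q_C(\cdot)$ whose index must be non-positive for the $\bar 0$ convention to kill it. For $r = 0$ the critical index is $K + 5k - \lambda$; for $r = 3$ it is $K + 5k + 3 - \lambda k - \mu$. The hypotheses $\lambda > K+5$ and $\lambda + \mu > K + 6$ are exactly what keep these inequalities valid for small $k$, and the upper endpoint $\lambda + \nu$ records the last $n$ at which both remain so.

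To justify the closed form $\nu = \max(0, ((K+4-\lambda) \bmod 5) - 1)$, I would tabulate the five possible residues of $\lambda - K$ modulo $5$ and in each case read off the largest admissible remainder before the $r = 0$ summand-vanishing inequality first fails. This is a short finite check that collapses to the stated uniform expression.

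The main obstacle is bookkeeping rather than any deep idea: five parallel inductive arguments, each with two summands to match against the inductive hypothesis, all against an endpoint $\lambda + \nu$ that must be exactly right rather than merely correct up to a constant. A minor subtlety worth tracking is that the predecessor indices $n - 1, n - 2$ invoked in the inductive hypothesis must satisfy $n - 1, n - 2 \geq K + 1$, which is automatic for $n \geq K + 5$; the very first inductive step at $n = K + 5$ needs care since both predecessors still come from the initial condition rather than from previously-inducted pattern terms.
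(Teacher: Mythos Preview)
Your overall architecture matches the paper's proof exactly: induction on $n$ with a five-way split on $(n-K)\bmod 5$, followed by a tabulation of $\lambda - K \pmod 5$ to pin down $\nu$. However, your case analysis contains a substantive error that would derail the endpoint computation.

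You claim that $r=1,2,4$ are the routine cases in which ``both summands land on earlier pattern terms.'' In fact only $r=1$ has this property. For $r=2$ one has
\[
Q_C(K+5k+2)=Q_C\bigl(K+5k+2-(\lambda k+\mu)\bigr)+Q_C(K+5k-3),
\]
and for $r=4$ one has
\[
Q_C(K+5k+4)=Q_C(K+5k+4-\lambda)+Q_C(K+5k-1),
\]
so each of $r=0,2,3,4$ has exactly one summand that must vanish via the $\bar 0$ convention. The vanishing conditions split into two types: for $r=2$ and $r=3$ the critical index is $K+5k+c-\lambda k-\mu$ (with $c=2,3$), which is nonpositive for \emph{all} $k\ge 1$ once the hypotheses hold, since $\lambda>5$ makes $\lambda k+\mu$ outpace $5k+c$. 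These two cases therefore never limit the range. By contrast, for $r=0$ and $r=4$ the critical index is $K+5k+c-\lambda$ (with $c=0,4$), which is nonpositive precisely when $n\le\lambda$; these are the two cases that determine where the pattern stops.

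Because you misidentified $r=4$ as routine and $r=3$ as endpoint-constraining, your tabulation step would compute the wrong $\nu$. For instance, when $\lambda-K\equiv 0\pmod 5$ the correct value is $\nu=3$ (positions $\lambda+1,\lambda+2,\lambda+3$ have residues $1,2,3$ and still work, but $\lambda+4$ has residue $4$ and fails), whereas tracking only the $r=0$ failure would give $\nu=4$. Once you swap $r=3$ and $r=4$ in your classification, the rest of your plan goes through and coincides with the paper's argument.
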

\begin{proof}
%
The proof is by induction on the index.  The base cases are $Q_C\p{K+1}$ through $Q_C\p{K+4}$, which are part of the initial condition.  Now, suppose $K+5\leq n\leq \As$ (we handle indices greater than $\As$ later), and suppose that $Q_C\p{n'}$ is what we want it to be for all $K+1\leq n'<n$.  
There are five cases to consider:
\begin{description}
\item[$n-K\equiv0\pb{\modd 5}$:] In this case, $n=K+5k$ for some $k$.  Applying the $Q$-recurrence, we have
\begin{align*}
Q_C\p{K+5k}&=Q_C\p{K+5k-Q_C\p{K+5k+4}}\\
&\hspace{0.17in}+Q_C\p{K+5k-Q_C\p{K+5k+3}}\\
&=Q_C\p{K+5k-3}+Q_C\p{K+5k-\As}\\
&=5+0\\
&=5,
\end{align*}
as required.  Note that the validity of this case depends on $n\leq\lambda$.
\item[$n-K\equiv1\pb{\modd 5}$:] In this case, $n=5k+1$ for some $k$.  Applying the $Q$-recurrence, we have
\begin{align*}
Q_C\p{K+5k+1}&=Q_C\p{K+5k+1-Q_C\p{K+5k}}\\
&\hspace{0.17in}+Q_C\p{K+5k+1-Q_C\p{K+5k-1}}\\
&=Q_C\p{K+5k+1-5}+Q_C\p{K+5k+1-3}\\
&=\As\p{k-1}+\Bs+\As\\
&=\As k+\Bs,
\end{align*}
as required.  Note that the validity of this case does not depend on $\As$ or $\Bs$.
\item[$n-K\equiv2\pb{\modd 5}$:] In this case, $n=K+5k+2$ for some $k$.  Applying the $Q$-recurrence, we have
\begin{align*}
Q_C\p{K+5k+2}&=Q_C\p{K+5k+2-Q_C\p{K+5k+1}}\\
&\hspace{0.17in}+Q_C\p{K+5k+2-Q_C\p{K+5k}}\\
&=Q_C\p{K+5k+2-\p{\As k+\Bs}}+Q_C\p{K+5k+2-5}\\
&=0+5\\
&=5,
\end{align*}
as required.  Note that the validity of this case does not depend on $\As$ or $\Bs$.
\item[$n-K\equiv3\pb{\modd 5}$:] In this case, $n=5k+3$ for some $k$.  Applying the $Q$-recurrence, we have
\begin{align*}
Q_C\p{K+5k+3}&=Q_C\p{K+5k+3-Q_C\p{K+5k+2}}\\
&\hspace{0.17in}+Q_C\p{K+5k+3-Q_C\p{K+5k+1}}\\
&=Q_C\p{K+5k+3-5}+Q_C\p{K+5k+3-\p{\As k+\Bs}}\\
&=\As+0\\
&=\As,
\end{align*}
as required.  Note that the validity of this case does not depend on $\As$ or $\Bs$.
\item[$n-K\equiv4\pb{\modd 5}$:] In this case, $n=5k+4$ for some $k$.  Applying the $Q$-recurrence, we have
\begin{align*}
Q_C\p{5k+4}&=Q_C\p{K+5k+4-Q_C\p{K+5k+3}}\\
&\hspace{0.17in}+Q_C\p{K+5k+4-Q_C\p{K+5k+2}}\\
&=Q_C\p{K+5k+4-\As}+Q_C\p{K+5k+4-5}\\
&=0+3\\
&=3,
\end{align*}
as required.  Note that the validity of this case depends on $n\leq\lambda$.
\end{description}

This proves that the pattern lasts through index $\As$.  We now complete the proof by showing that the pattern continues through index $\As+\nu$.  There are five cases to consider:
\begin{description}
\item[$\As-K\equiv0\pmod5$:] In this case, $\nu=3$, and the calculation of $Q_C\p{\As}$ falls into the first case above.  The next three cases do not depend on $\As$, so the values of $Q_C\p{\As+1}$, $Q_C\p{\As+2}$, and $Q_C\p{\As+3}$ are what we want.
\item[$\As-K\equiv1\pmod5$:] In this case, $\nu=2$, and the calculation of $Q_C\p{\As}$ falls into the second case above.  The next two cases do not depend on $\As$, so the values of $Q_C\p{\As+1}$ and $Q_C\p{\As+2}$ are what we want.
\item[$\As-K\equiv2\pmod5$:] In this case, $\nu=1$, and the calculation of $Q_C\p{\As}$ falls into the third case above.  The next case does not depend on $\As$, so the value of $Q_C\p{\As+1}$ is what we want.
\item[$\As-K\equiv3\pmod5$:] In this case, $\nu=0$, so there is nothing to be checked.
\item[$\As-K\equiv4\pmod5$:] In this case, $\nu=0$, so there is nothing to be checked.
\end{description}
\end{proof}

We now prove Theorem~\ref{thm:main}.
\begin{proof}

We refer the reader to Appendix~\ref{app:Qwd} for terms $Q_{\bar N}\p{1}$ through $Q_{\bar N}\p{N+28}$.  Those calculations, which are for $Q_N$, also apply for $Q_{\bar N}$.  From there, it is easy to compute $Q_{\bar N}\p{N+29}$ through $Q_{\bar N}\p{N+34}$, and each one equals its purported value.  We now compute the next four terms:
\begin{itemize}
\item $Q_{\bar N}\p{N+35}=Q_{\bar N}\p{N+3}+Q_{\bar N}\p{N+32}=\pb{N+2}+\pb{2N+4}=3N+6$.
\item $Q_{\bar N}\p{N+36}=Q_{\bar N}\p{N+4}=5$.
\item $Q_{\bar N}\p{N+37}=Q_{\bar N}\p{N+32}=2N+4=A_1$.
\item $Q_{\bar N}\p{N+38}=Q_{\bar N}\p{N+33}=3$.
\end{itemize}
By Lemma~\ref{lem:5cyc}, taking $K=N+34$, $\As=2N+4$, and $\Bs=3N+6$, these four terms spawn a period-$5$ pattern:
\[
\begin{cases}
Q_{\bar N}\p{N+34+5k}=5\\
Q_{\bar N}\p{N+34+5k+1}=\pb{2N+4}k+\pb{3N+6}\\
Q_{\bar N}\p{N+34+5k+2}=5\\
Q_{\bar N}\p{N+34+5k+3}=2N+4\\
Q_{\bar N}\p{N+34+5k+4}=3,
\end{cases}
\]
provided that $N>35$.  Lemma~\ref{lem:5cyc} then guarantees that this pattern persists through 
index $Q_{\bar N}\p{A_1+\nu}$, where
\begin{align*}
\nu&=\max\p{0,\p{\p{N+34+4-A_1}\modd5}-1}\\
&=\max\p{0,\p{\p{N+34+4-2N-4}\modd5}-1}\\
&=\max\p{0,\p{\p{34-N}\modd5}-1}\\
&=\max\p{0,\p{\p{4-N}\modd5}-1}\\
&=\max\p{0,\p{\p{3-\p{N-1}}\modd5}-1}\\
&=\max\p{0,\p{\p{3-C_1}\modd5}-1}\\
&=C'_1,
\end{align*}
as required. 
Shifting indices and recalling the definitions of $A_1$ and $B_1$ allows us to rewrite this pattern as
\[
\begin{cases}
Q_{\bar N}\p{N+5k}=A_1k+B_1\\
Q_{\bar N}\p{N+5k+1}=5\\
Q_{\bar N}\p{N+5k+2}=A_1\\
Q_{\bar N}\p{N+5k+3}=3\\
Q_{\bar N}\p{N+5k+4}=5,
\end{cases}
\]
which is the required form.
We now prove part~(\ref{it:p1}) of Theorem~\ref{thm:main}, which refers to a parameter $1\leq m<j$.  Suppose inductively that we are considering the value $m<j$, and that $Q_{\bar N}\p{A_m-3}=3$, $Q_{\bar N}\p{A_m-2}=5$, $Q_{\bar N}\p{A_m-1}=A_m\pb{\frac{A_m-A_{m-1}-3}{5}}+B_m$, $Q_{\bar N}\p{A_m}=5$, and $Q_{\bar N}\p{A_m+1}=A_m$. Note that this is all true if $m=0$, from the above.  So, $m=0$ serves as our (already proved) base case.


Since $m<j$, it must be the case that $C'_m=1$ (as $C_m=1$ implies $C'_m=1$). So, $Q_{\bar N}\p{A_m+2}$ is the first non-calculated term. 
We compute the next $9$ terms:
\begin{itemize}
\item $Q_{\bar N}\p{A_m+2}=Q_{\bar N}\p{2}+Q_{\bar N}\p{A_m-3}=2+3=5$.
\item $Q_{\bar N}\p{A_m+3}=Q_{\bar N}\p{A_m-2}+Q_{\bar N}\p{3}=5+3=8$.
\item $Q_{\bar N}\p{A_m+4}=Q_{\bar N}\p{A_m-4}+Q_{\bar N}\p{A_m-1}$.  We have that $Q_{\bar N}\p{A_m-4}=A_m$.  But, 
$Q_{\bar N}\p{A_m-1}=A_m\pb{\frac{A_m-A_{m-1}-3}{5}}+B_m$. 
So,
\begin{align*}
Q_{\bar N}\p{A_m+4}&=A_m\pb{1+\frac{A_m-A_{m-1}-3}{5}}+B_m\\
&=A_m\pb{\frac{A_m-A_{m-1}+2}{5}}+B_m\\
&=A_{m+1}.
\end{align*}
This term is much larger than $A_m$.
\item $Q_{\bar N}\p{A_m+5}=Q_{\bar N}\p{A_m-3}=3$.
\item $Q_{\bar N}\p{A_m+6}=Q_{\bar N}\p{A_m+1}=8$.
\item $Q_{\bar N}\p{A_m+7}=Q_{\bar N}\p{A_m-1}+Q_{\bar N}\p{A_m+4}$.  We have from before 
$Q_{\bar N}\p{A_m-1}=A_m\pb{\frac{A_m-A_{m-1}-3}{5}}+B_m$. 
But, our calculations in the $Q_{\bar N}\p{A_m+4}$ step allow us to write $Q_{\bar N}\p{A_m-1}=A_{m+1}-A_m$.  So,
$Q_{\bar N}\p{A_m+7}=A_{m+1}-A_m+A_{m+1}=2A_{m+1}-A_m=A_{m+1}+B_{m+1}$.
\item $Q_{\bar N}\p{A_m+8}=Q_{\bar N}\p{A_m}=5$.
\item $Q_{\bar N}\p{A_m+9}=Q_{\bar N}\p{A_m+4}=A_{m+1}$.
\item $Q_{\bar N}\p{A_m+10}=Q_{\bar N}\p{A_m+5}=3$.
\end{itemize}
The first five of these terms are what we want.  And, by Lemma~\ref{lem:5cyc}, the last four terms generate a period-$5$ pattern as in the lemma statement, with $K=A_m+6$, $\As=A_{m+1}$, and $\Bs=A_{m+1}+B_{m+1}$.  
The resulting pattern is
\[
\begin{cases}
Q_{\bar N}\p{A_m+6+5k}=5\\
Q_{\bar N}\p{A_m+6+5k+1}=A_{m+1}\p{k+1}+B_{m+1}\\
Q_{\bar N}\p{A_m+6+5k+2}=5\\
Q_{\bar N}\p{A_m+6+5k+3}=A_{m+1}\\
Q_{\bar N}\p{A_m+6+5k+4}=3,
\end{cases}
\]
which lasts through index $A_{m+1}+\nu$, where
\begin{align*}
\nu&=\max\p{0,\p{\p{A_m+6+4-A_{m+1}}\modd5}-1}\\
&=\max\p{0,\p{\p{A_m-A_{m+1}}\modd5}-1}.
\end{align*}
Shifting indices by $6$, the pattern can be rewritten as
\[
\begin{cases}
Q_{\bar N}\p{A_m+5k}=3\\
Q_{\bar N}\p{A_m+5k+1}=5\\
Q_{\bar N}\p{A_m+5k+2}=A_{m+1}k+A_{m+1}-A_m=A_{m+1}k+B_{m+1}\\
Q_{\bar N}\p{A_m+5k+3}=5\\
Q_{\bar N}\p{A_m+5k+4}=A_{m+1},
\end{cases}
\]
the required form

To complete the proof of part~(\ref{it:p1}) of the theorem, we need to show that $\nu=C_{m+1}'$.  We know that $C_{m+1}\equiv\pb{A_{m+1}+2m+3}\bmod 5$.  This means that $A_{m+1}\equiv \pb{C_{m+1}-2m-3}\bmod 5$.  Similarly, $A_m\equiv \pb{C_m-2m-1}\bmod 5$.  But, we know that $C_m=1$.  So, $A_m\equiv-2m\bmod 5$.  Combining these yields $A_{m+1}-A_m\equiv \pb{C_{m+1}-3}\modd5$.  This allows us to say that
\begin{align*}
\nu&=\max\p{0,\p{\p{A_m-A_{m+1}}\modd5}-1}\\
&=\max\p{0,\p{\p{3-C_{m+1}}\modd5}-1}\\
&=C'_{m+1},
\end{align*}
as required.
All that remains now is to determine the eventual behaviors for $C_j\in\st{0,2,3,4}$ (parts~(\ref{it:p0}), (\ref{it:p2}), (\ref{it:p3}), and~(\ref{it:p4}) of the theorem respectively).
\begin{description}
\item[$C_j=0$:] The first term here we have not yet computed is $Q_{\bar N}\p{A_j+3}$.  We compute the next $158$ terms (see Appendix~\ref{app:160}), and we observe that the sequence ends once $Q_{\bar N}\p{A_j+160}=0$.  Computation of these terms assumes that $N\geq118$, because computing $Q_{\bar N}\p{A_j+157}$ refers to $Q_{\bar N}\p{118}$, which we assume equals $118$ (and this is the strongest requirement we use anywhere in the calculations).
\item[$C_j=2$:] The first term here we have not yet computed is $Q_{\bar N}\p{A_j+1}$.  We compute the next $2$ terms (keeping in mind that $Q_{\bar N}\p{A_j}=A_j$ and $Q_{\bar N}\p{A_j-1}=5$):
\begin{itemize}
\item $Q_{\bar N}\p{A_j+1}=Q_{\bar N}\p{1}+Q_{\bar N}\p{A_j-4}=1+3=4$.
\item $Q_{\bar N}\p{A_j+2}=Q_{\bar N}\p{A_j-2}+Q_{\bar N}\p{2}=A_j\pb{\frac{A_j-A_{j-1}-4}{5}}+B_j+2$.
\end{itemize}
We now have the sort of initial condition described by Proposition~\ref{prop:rst} with $K=A_j-2$, $\As=A_j$, and $\Bs=A_j\pb{\frac{A_j-A_{j-1}-4}{5}}+B_j+2$.  By Proposition~\ref{prop:rst}, this results in the pattern
\[
\begin{cases}
Q_{\bar N}\p{A_j-2+5k}=5 R\p{k}\\
Q_{\bar N}\p{A_j-2+5k+1}=5 S\p{k}\\
Q_{\bar N}\p{A_j-2+5k+2}=A_j T\p{k}\\
Q_{\bar N}\p{A_j-2+5k+3}=4\\
Q_{\bar N}\p{A_j-2+5k+4}=5 R\p{k},
\end{cases}
\]
as long as the $R$, $S$, and $T$ sequences exist and as long as $A_j T\p{k}\geq A_j-2+5k+4$.  This last condition is equivalent to
\[
T\p{k}\geq1+\frac{5k+2}{A_j}.
\]
Shifting indices by $2$, the pattern can be rewritten as
\[
\begin{cases}
Q_{\bar N}\p{A_j+5k}=A_j T\p{k}\\
Q_{\bar N}\p{A_j+5k+1}=4\\
Q_{\bar N}\p{A_j+5k+2}=5 R\p{k}\\
Q_{\bar N}\p{A_j+5k+3}=5 R\p{k+1}\\
Q_{\bar N}\p{A_j+5k+4}=5 S\p{k+1},
\end{cases}
\]
as required.  
\item[$C_j=3$:] The first term here we have not yet computed is $Q_{\bar N}\p{A_j+1}$.  We compute the next $4$ terms, obtaining the values in the theorem statement. We observe that the sequence ends once $Q_{\bar N}\p{A_j+4}=0$.
\item[$C_j=4$:] The first term here we have not yet computed is $Q_{\bar N}\p{A_j+4}$.  We compute the next $11$ terms, obtaining the values in the theorem statement. We observe that the sequence ends once $Q_{\bar N}\p{A_j+14}=0$.
\end{description}
\end{proof}

\subsection{Discussion of Theorem~\ref{thm:main}}\label{ss:disc}

See Figure~\ref{fig:42} for a plot of the first $30000$ terms of $Q_{\overline{42}}$.  For $N=42$, we have $j=3$ and $C_3=2$, so, after the initial condition, there is the zone before $Q_{42}$ dies, followed by a (very short) quasilinear piece, followed by two (successively longer) quasilinear pieces, followed by the eventual Proposition~\ref{prop:rst}-like behavior.  Both axes have logarithmic scales, as otherwise the third quasilinear piece would dominate the plot.  (Each $A_i$ is on the order of the square of the previous one.)

\begin{figure}
\begin{center}
\includegraphics[width=300pt]{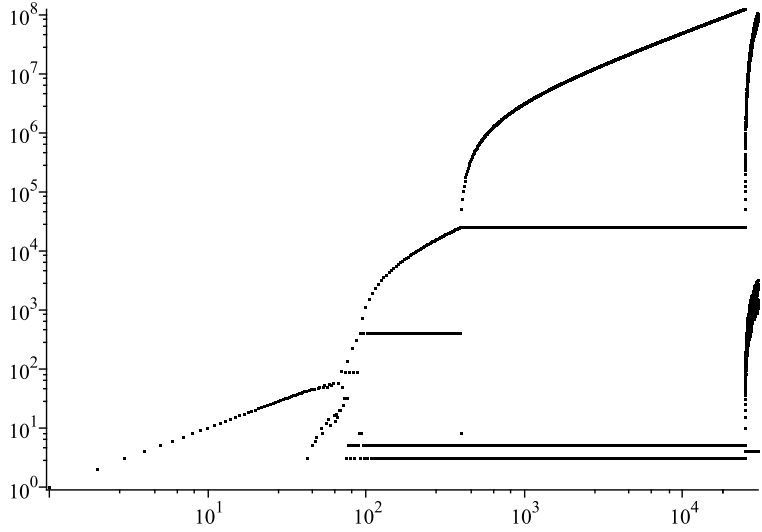}
\end{center}
\caption{The first $30000$ terms of $Q_{\overline{42}}$ (both axes log scale)}
\label{fig:42}
\label{o:A274055}
\end{figure}

Theorem~\ref{thm:main} completely characterizes the behavior of $Q_{\bar N}$ (as long as $N$ is sufficiently large and as long as conjectures about the $R$, $S$, and $T$ sequences hold), but the characterization of which $N$ result in which behavior is not immediately apparent.  Every $N$ with $j<\infty$ (which is every known value of $N$) is associated to a pair $\pb{j,C_j}\in\mathbb{Z}_{>0}\times\st{0,2,3,4}$.  
We denote these values by $j\p{N}$ and $C\p{N}$ respectively.  We also use notation $A_i\p{N}$, $B_i\p{N}$, and $C_i\p{N}$ to denote $A_i$, $B_i$, and $C_i$ values for $N$.  Our first observation is the following:
\begin{prop}\label{prop:j5p}
Let $N$ be a positive integer, and let $j=j\p{N}$.  For all $1\leq i\leq j$, $A_i\p{N+5^j}\equiv A_i\p{N}\pb{\modd 5^{j-i+1}}$.
\end{prop}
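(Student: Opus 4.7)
The plan is to induct on $i$, tracking $\delta_i := A_i(N+5^j) - A_i(N)$ and showing $5^{j-i+1} \mid \delta_i$ for $1 \leq i \leq j$. The linchpin is an elementary arithmetic observation: for $1 \leq i \leq j-1$, the numerator $V_i := A_i - A_{i-1} + 2$ in the recurrence $A_{i+1} = A_i V_i / 5 + B_i$ is divisible by $5$. For $i \geq 2$, the definition $C_i \equiv A_i + 2i + 1 \pmod{5}$ gives $V_i \equiv C_i - C_{i-1} \pmod{5}$, and since $C_k = 1$ for all $k < j$ by the definition of $j$, this vanishes mod $5$. The case $i = 1$ is a direct check: $V_1 = N + 8$, which is divisible by $5$ precisely when $C_1 = (N-1) \bmod 5 = 1$.

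The base case $i = 1$ is immediate from $A_1 = 2N + 4$: we get $\delta_1 = 2 \cdot 5^j$, divisible by $5^j$. The step $i = 1 \to i = 2$ requires separate treatment because $B_1 = -11N - 22$ does not fit the formula $B_i = A_i - A_{i-1}$ that governs $i \geq 2$. A direct expansion of $A_2$ at $N$ versus $N + 5^j$ yields $\delta_2 = 5^{j-1}\pb{4N - 35 + 2 \cdot 5^j}$, which is divisible by $5^{j-1}$, as required.

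For the inductive step from $i$ to $i+1$ with $i \geq 2$, I would substitute $B_i = A_i - A_{i-1}$ into the recurrence, write $\tilde A_k := A_k + \delta_k$ for the values at $N + 5^j$, and subtract to obtain
\[
\delta_{i+1} = \frac{A_i(\delta_i - \delta_{i-1}) + \delta_i V_i + \delta_i(\delta_i - \delta_{i-1})}{5} + (\delta_i - \delta_{i-1}).
\]
Using the two-step inductive hypothesis $5^{j-i+1} \mid \delta_i$ and $5^{j-i+2} \mid \delta_{i-1}$ together with $5 \mid V_i$, each of the three summands in the numerator is divisible by $5^{j-i+1}$, so the fraction is divisible by $5^{j-i}$. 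The remaining term $\delta_i - \delta_{i-1}$ is divisible by $5^{j-i+1}$, a fortiori by $5^{j-i}$. This yields $5^{j-i} \mid \delta_{i+1}$, which is the desired claim for $i+1$.

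The main obstacle will be the tight $5$-adic bookkeeping. The limiting term is $A_i(\delta_i - \delta_{i-1})/5$, whose $5$-adic valuation just barely clears the target threshold $j - i$; this forces the asymmetric hypothesis in which $\delta_{i-1}$ is assumed divisible by one higher power of $5$ than $\delta_i$, which in turn compels either strong induction or an explicit two-step induction. A minor secondary point is that although the paper nominally writes the recurrence for $i \geq 2$, the proof of Theorem~\ref{thm:main} implicitly uses $A_2 = A_1 V_1 / 5 + B_1$ with the ad hoc $B_1$, so the first step of the induction must be treated as a standalone direct calculation rather than an instance of the general pattern.
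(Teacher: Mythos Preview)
Your argument is correct and matches the paper's approach almost exactly: both treat $i=1$ and $i=2$ by direct computation and then run a two-step induction through the recurrence (the paper first rewrites it as $A_i=A_{i-1}(A_{i-1}-A_{i-2}+7)/5-A_{i-2}$ and expands the difference to factor out $5^{j-i+1}$ explicitly). Your side lemma $5\mid V_i$ for $i<j$ is not in the paper and is not actually needed for the divisibility count---$5^{j-i+1}\mid\delta_i$ already forces $5^{j-i+1}\mid\delta_iV_i$ without any hypothesis on $V_i$---though it does cleanly certify that each $A_i$ with $i\le j$ is an integer, a point the paper leaves implicit.
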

\begin{proof}
The proof is by induction on $i$.  If $i=1$, then $A_1\p{N}=2N+4$ and $A_1\p{N+5^j}=2\pb{N+5^j}+4=2N+4+2\cdot5^j$.  Then, $A_1\p{N+5^j}-A_1\p{N}=2\cdot5^j$, which is divisible by $5^j=5^{j-1+1}$, as required.  If $i=2$, then 
\[
A_2\p{N}=\frac{2}{5}N^2-7N-\frac{78}{5}
\]
and
\[
A_2\p{N+5^j}=\frac{2}{5}N^2-7N-\frac{78}{5}+2\cdot5^{2j-1}-7\cdot 5^j+4\cdot 5^{j-1}.
\]
The difference is divisible by $5^{j-1}$, as required.

Now, suppose $i\geq3$ and suppose that Proposition~\ref{prop:j5p} holds for all smaller $i$ values.  Recall that
\[
A_i=A_{i-1}\pb{\frac{A_{i-1}-A_{i-2}+2}{5}}+B_{i-1}.
\]
Since $i\geq3$, $B_{i-1}=A_{i-1}-A_{i-2}$, so we can eliminate $B_{i-1}$ and write
\[
A_i=A_{i-1}\pb{\frac{A_{i-1}-A_{i-2}+7}{5}}-A_{i-2}.
\]
By induction, $A_{i-1}\p{N+5^j}=A_{i-1}\p{N}+\alpha\cdot5^{j-i+2}$ for some integer $\alpha$.  Similarly, $A_{i-2}\p{N+5^j}=A_{i-2}\p{N}+\beta\cdot5^{j-i+3}$ for some integer $\beta$.

We now evaluate
\begin{align*}
A_i\p{N+5^j}-A_i\p{N}&=A_{i-1}\p{N+5^j}\pb{\frac{A_{i-1}\p{N+5^j}-A_{i-2}\p{N+5^j}+7}{5}}\\
&\hspace{0.17in}-A_{i-2}\p{N+5^j}-A_{i-1}\p{N}\pb{\frac{A_{i-1}\p{N}-A_{i-2}\p{N}+7}{5}}\\
&\hspace{0.17in}-A_{i-2}\p{N}\\
&=\pb{A_{i-1}\p{N}+\alpha\cdot5^{j-i+2}}\\
&\hspace{0.17in}\cdot\pb{\frac{\pb{A_{i-1}\p{N}+\alpha\cdot5^{j-i+2}}-\pb{A_{i-2}\p{N}+\beta\cdot5^{j-i+3}}+7}{5}}\\
&\hspace{0.17in}-\pb{A_{i-2}\p{N}+\beta\cdot5^{j-i+2}}\\
&\hspace{0.17in}-A_{i-1}\p{N}\pb{\frac{A_{i-1}\p{N}-A_{i-2}\p{N}+7}{5}}-A_{i-2}\p{N}.
\end{align*}
Simplifying this expression yields
\begin{align*}
A_i\p{N+5^j}-A_i\p{N}&=5^{j-i+1}\pb{2\alpha A_{i-1}\p{N}-5\beta A_{i-1}\p{N}-\alpha A_{i-2}\p{N}\right.\\
&\hspace{0.2in}\left.+\alpha^2\cdot5^{j-i+2}-\alpha\cdot5^{j-i+3}+7\alpha-25\beta},
\end{align*}
which is divisible by $5^{j-i+1}$, as required.
\end{proof}
Of course, Proposition~\ref{prop:j5p} immediately generalizes to replacing $5^j$ with any integer multiple of $5^j$.  We have the following corollary to Proposition~\ref{prop:j5p} (which also generalizes in this way):
\begin{cor}\label{cor:j5p}
For all $N$, and for all $1\leq i\leq j\p{N}$, $C_i\!\pb{N+5^{j\p{N}}}=C_i\p{N}$.  In particular, $j\!\pb{N+5^{j\p{N}}}=j\p{N}$.
\end{cor}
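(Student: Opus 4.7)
The plan is to deduce the corollary directly from Proposition~\ref{prop:j5p} by unwinding the definitions of $C_i$ and $j$. The main work has already been done in the proposition; here we just need to extract the mod $5$ consequence and then observe what it implies for the first index where $C$ deviates from $1$.

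First I would establish the mod $5$ congruence $A_i(N+5^{j(N)}) \equiv A_i(N) \pmod 5$ for all $1 \le i \le j(N)$. This is immediate from Proposition~\ref{prop:j5p}, since that proposition gives congruence modulo $5^{j-i+1}$, and for $i \le j$ we have $j-i+1 \ge 1$, hence the congruence holds at least modulo $5$. From the definition $C_i = (A_i + 2i + 1)\bmod 5$, adding the same constant $2i+1$ to congruent residues and reducing mod $5$ preserves equality, so $C_i(N+5^{j(N)}) = C_i(N)$ for every $1 \le i \le j(N)$. That proves the main statement.

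For the ``in particular'' clause, I would invoke the definition of $j(N)$: it is the first index with $C_j \ne 1$. By the previous paragraph, applied to each $i \le j(N)$, the sequences $C_1,\ldots,C_{j(N)}$ agree for $N$ and $N+5^{j(N)}$. In particular, $C_i(N+5^{j(N)}) = 1$ for $1 \le i < j(N)$ and $C_{j(N)}(N+5^{j(N)}) = C_{j(N)}(N) \neq 1$. So $j(N)$ is also the first index of deviation for $N+5^{j(N)}$, giving $j(N+5^{j(N)}) = j(N)$.

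There is essentially no obstacle: the corollary is a bookkeeping consequence of the proposition. The only point requiring a moment of care is that $C_i$ is defined via a mod $5$ reduction of $A_i + 2i + 1$, so one has to note that the mod $5^{j-i+1}$ estimate from Proposition~\ref{prop:j5p} is enough precisely because $j-i+1 \ge 1$ throughout the range $1 \le i \le j(N)$; this bound would fail (and the corollary might too) for $i > j(N)$, which is why the index restriction matters.
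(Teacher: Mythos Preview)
Your argument is correct and matches the paper's proof almost verbatim. One small caveat: the formula $C_i=(A_i+2i+1)\bmod 5$ that you invoke is the definition only for $i\ge 2$; the paper defines $C_1=(N-1)\bmod 5$ separately, so the case $i=1$ needs its own (trivial) check via $N+5^j\equiv N\pmod 5$. The paper covers both cases at once by noting that each $C_i$ is a function of $A_i\bmod 5$ and $i$ alone, which is true for $i=1$ as well since $A_1=2N+4$ determines $N\bmod 5$.
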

\begin{proof}
Let $j=j\p{N}$.  Let $1\leq i\leq j$.  By Proposition~\ref{prop:j5p},
\[
A_i\p{N+5^j}\equiv A_i\p{N}\pb{\modd 5^{j-i+1}}.
\]
Since $C_i$ is a function solely of $A_i \bmod 5$ and of $i$, we have $C_i\p{N+5^j}=C_i\p{N}$.  Since $i$ is arbitrary in the preceding expression, we have $C_i\p{N+5^j}=C_i\p{N}$ for every such $i$, as required.  Also, $C_i\p{N}=1$ if $i<j$ (by the definition of $j$).  So, by the definition of $j$, we have $j\pb{N+5^j}=j\p{N}$, as required.
\end{proof}

Corollary~\ref{cor:j5p} tells us that, to determine the behavior of $Q_{\bar N}$, we should first look at $N\bmod 5$.  If $C_1\p{N}=1$, then we need to look at $N\bmod 25$.  If  $C_2\p{N}=1$, then we need to look at $N\bmod 125$, etc.  
This process can be thought of in terms of a tree on a subset of the strings $\st{0,1,2,3,4}^*$, each of which can be thought of as an integer written in base~$5$.  In addition, each leaf of the tree has one of four \quot{types.}
\begin{itemize}
\item The root of the tree is the empty string, and it has the five length-$1$ strings as children.
\item For a string $w$, interpret it as a base $5$ integer $N_w$.  Let $C=C_{\ab{w}}\p{N_w}$ (where $\ab{w}$ denotes the length of $w$).  If $C=1$, then $w$ has children $\st{xw:x\in\st{0,1,2,3,4}}$; otherwise $w$ is a leaf of type $C$.
\end{itemize}
To determine the behavior of $Q_{\bar N}$, read the base-$5$ digits of $N$ from right to left, and traverse the tree accordingly.  Each internal node (non-leaf) visited corresponds to an additional temporary quasilinear piece in the sequence.  When a leaf is reached, stop, and the leaf's type determines the eventual behavior, according to Theorem~\ref{thm:main}.  Consider $N=42$ as an example.  In base~$5$, $42$ is $132_5$.  The last digit is $2$, so we go from the root of the tree to the node labeled $2$.  This is not a leaf, so we go from it to the node labeled $32$.  This is also not a leaf, so we continue to the node labeled $132$.  This is a leaf of type $2$.  So, $Q_{\overline{42}}$ consists of three period-$5$ quasilinear pieces followed by a Proposition~\ref{prop:rst}-like piece.

The tree has a structure consisting of levels: level $i$ consists of the strings of length $i$ that appear in the tree.  See Figure~\ref{fig:tree1} for a diagram of levels $0$ through $7$ of the tree.  Each leaf is labeled as its type ($0$, $2$, $3$, or $4$).  The black nodes are internal nodes.  This includes the black nodes on the right.  Each of these has five children, but they are not shown because the tree is truncated.  The structure of this tree is poorly understood.  See~\cite{thesis} for further discussion.

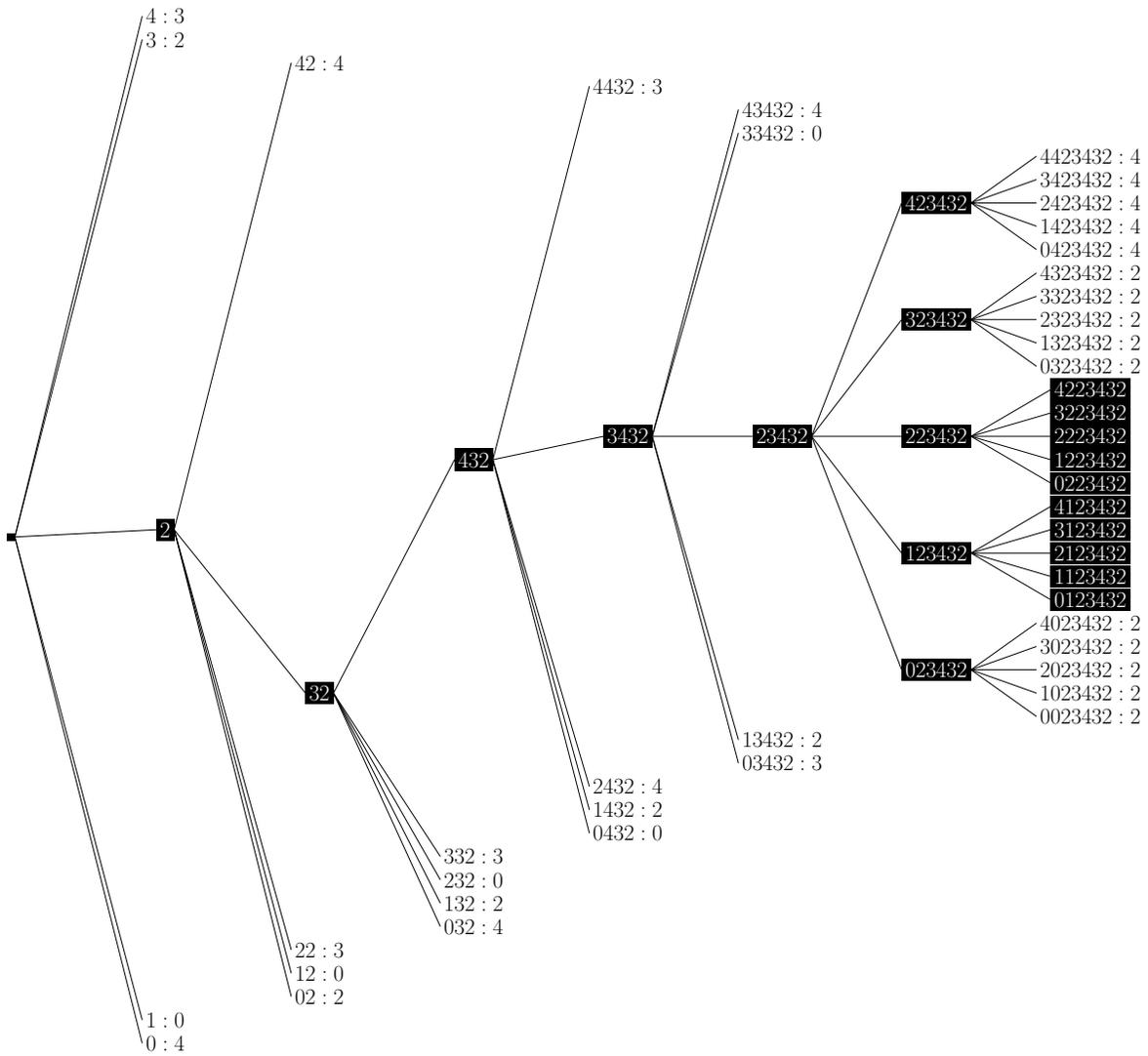
\begin{figure}
\begin{center}
\tikzstyle{c0}=[]
\tikzstyle{c1}=[color=white, top color=black, bottom color = black]
\tikzstyle{c2}=[]
\tikzstyle{c3}=[]
\tikzstyle{c4}=[]
\begin{tikzpicture}[level distance = 6cm, grow=right,scale=0.35]
\Tree [.\node[c1]{{\Huge $$}};
    \node[c4]{{\Huge $0:4$}};
    \node[c0]{{\Huge $1:0$}};
    [.\node[c1]{{\Huge $2$}};
        \node[c2]{{\Huge $02:2$}};
        \node[c0]{{\Huge $12:0$}};
        \node[c3]{{\Huge $22:3$}};
        [.\node[c1]{{\Huge $32$}};
            \node[c4]{{\Huge $032:4$}};
            \node[c2]{{\Huge $132:2$}};
            \node[c0]{{\Huge $232:0$}};
            \node[c3]{{\Huge $332:3$}};
            [.\node[c1]{{\Huge $432$}};
                \node[c0]{{\Huge $0432:0$}};
                \node[c2]{{\Huge $1432:2$}};
                \node[c4]{{\Huge $2432:4$}};
                [.\node[c1]{{\Huge $3432$}};
                    \node[c3]{{\Huge $03432:3$}};
                    \node[c2]{{\Huge $13432:2$}};
                    [.\node[c1]{{\Huge $23432$}};
                        [.\node[c1]{{\Huge $023432$}};
                            \node[c2]{{\Huge $0023432:2$}};
                            \node[c2]{{\Huge $1023432:2$}};
                            \node[c2]{{\Huge $2023432:2$}};
                            \node[c2]{{\Huge $3023432:2$}};
                            \node[c2]{{\Huge $4023432:2$}};
                        ]
                        [.\node[c1]{{\Huge $123432$}};
                            \node[c1]{{\Huge $0123432$}};
                            \node[c1]{{\Huge $1123432$}};
                            \node[c1]{{\Huge $2123432$}};
                            \node[c1]{{\Huge $3123432$}};
                            \node[c1]{{\Huge $4123432$}};
                        ]
                        [.\node[c1]{{\Huge $223432$}};
                            \node[c1]{{\Huge $0223432$}};
                            \node[c1]{{\Huge $1223432$}};
                            \node[c1]{{\Huge $2223432$}};
                            \node[c1]{{\Huge $3223432$}};
                            \node[c1]{{\Huge $4223432$}};
                        ]
                        [.\node[c1]{{\Huge $323432$}};
                            \node[c2]{{\Huge $0323432:2$}};
                            \node[c2]{{\Huge $1323432:2$}};
                            \node[c2]{{\Huge $2323432:2$}};
                            \node[c2]{{\Huge $3323432:2$}};
                            \node[c2]{{\Huge $4323432:2$}};
                        ]
                        [.\node[c1]{{\Huge $423432$}};
                            \node[c4]{{\Huge $0423432:4$}};
                            \node[c4]{{\Huge $1423432:4$}};
                            \node[c4]{{\Huge $2423432:4$}};
                            \node[c4]{{\Huge $3423432:4$}};
                            \node[c4]{{\Huge $4423432:4$}};
                        ]
                    ]
                    \node[c0]{{\Huge $33432:0$}};
                    \node[c4]{{\Huge $43432:4$}};
                ]
                \node[c3]{{\Huge $4432:3$}};
            ]
        ]
        \node[c4]{{\Huge $42:4$}};
    ]
    \node[c2]{{\Huge $3:2$}};
    \node[c3]{{\Huge $4:3$}};
]
\end{tikzpicture}
\end{center}
\caption{Levels $0$ through $7$ of the tree of behaviors}
\label{fig:tree1}
\end{figure}

\subsection{The Remaining Values of $N$}\label{ss:sporadic}

Theorem~\ref{thm:main} characterizes the behavior of $Q_{\bar N}$ for all
\[
N\notin\st{n:2\leq n\leq34}\cup\st{n:1<n<118\text{ and }n\equiv 1\pb{\modd 5}}\cup\st{57,67,82,107,117}.
\]
These $55$ sequences can be studied individually by generating the sequences and observing the terms.  This study is carried out in~\cite{thesis}; what follows is a summary of those findings.  If $N\leq27$, $Q_{\bar N}$ appears to behave chaotically and last for a long time (at least $10$ million terms), unless $N\in\st{19,23,26}$, in which case $Q_{\bar N}$ is finite with no observable structure.  Thereafter, the remaining $Q_{\bar N}$ sequences are finite, except for $N\in\st{33,36,67,71}$.  These all eventually satisfy the conditions of Proposition~\ref{prop:rst} and are therefore conjectured to be infinite.

Of the $N$ values exceeding $27$, all but $N=67$ and $N=117$ can be computed explicitly until either a $0$ appears (ending the sequence) or until the conditions of Proposition~\ref{prop:rst} are satisfied.  For $N=67$ and $N=117$, Theorem~\ref{thm:main} can be used to describe many terms.  Thereafter, $Q_{\overline{67}}$ can be shown to satisfy Proposition~\ref{prop:rst}, and $Q_{\overline{117}}$ can be shown to reach $0$, and hence its end, not too long after Theorem~\ref{thm:main} stops applying.

\section{Future Work}\label{s:future}
This paper presents an initial application of the method of using parametrized families of initial conditions to generate solutions to nested recurrence relations.  The application here involves the simplest sort of non-constant initial condition (a sequential one) and the most well-known nested recurrence (the $Q$-recurrence).  Our methods are applicable to a wider range of initial conditions and recurrence relations.  Some initial work in this direction is undertaken in~\cite{thesis}.

Subsection~\ref{ss:rst} introduces a seemingly chaotic system of three nested recurrences, and it uses them to construct a solution to Hofstadter's recurrence that weaves together predictability and unpredictability.  There are a few directions that work related to these sequences could progress in~\cite{thesis}.  These include a deeper study of the properties of the $R$, $S$, and $T$ sequences or a search for other families of solutions to the $Q$-recurrence (or other recurrences) that behave analogously to Proposition~\ref{prop:rst}.

Quasilinear sequences appear frequently in this paper.  There are many known solutions to nested recurrences that are eventually quasilinear~\cite{golomb, symbhof}.  The quasilinear chunks of these solutions have a fixed starting point, but they continue forever.  Here, we consider quasilinear chunks with fixed starting and stopping points.  Previous work~\cite{symbhof} gives a method for discovering eventually quasilinear solutions of arbitrary periods to arbitrary recurrences.  Perhaps it can be extended to also find temporary quasilinear solutions.

Finally, the structure of the tree in Figure~\ref{fig:tree1} remains poorly understood.  In particular, it is unknown whether the tree is finite or infinite, and it is unknown whether every string in $\st{0,1,2,3,4}^*$ has a suffix in some leaf (equivalently, whether $j\p{N}<\infty$ for all $N$).  A full understanding of this tree would give a more efficient characterization of the behaviors of the sequences $Q_{\bar N}$.

\appendix

\section{First $28$ terms following initial condition of $Q_N$}\label{app:Qwd}

Assuming $N\geq14$, these are the first $28$ terms of $Q_N$ following the initial condition.

    \begin{align*}
    \mbb{Q_N\p{N+1}}&=Q_N\p{N+1-Q_N\p{N}}+Q_N\p{N+1-Q_N\p{N-1}}\\
    &=Q_N\p{N+1-N}+Q_N\p{N+1-\pb{N-1}}\\
    &=Q_N\p{1}+Q_N\p{2}=1+2=\mbb{3}\\
    &(N\geq2)
    \end{align*}
    \begin{align*}
    \mbb{Q_N\p{N+2}}&=Q_N\p{N+2-Q_N\p{N+1}}+Q_N\p{N+2-Q_N\p{N}}\\
    &=Q_N\p{N+2-3}+Q_N\p{N+2-N}\\
    &=Q_N\p{N-1}+Q_N\p{2}=N-1+2=\mbb{N+1}\\
    &(N\geq2)
    \end{align*}
    \begin{align*}
    \mbb{Q_N\p{N+3}}&=Q_N\p{N+3-Q_N\p{N+2}}+Q_N\p{N+3-Q_N\p{N+1}}\\
    &=Q_N\p{N+3-\pb{N+1}}+Q_N\p{N+3-3}\\
    &=Q_N\p{2}+Q_N\p{N}=2+N=\mbb{N+2}\\
    &(N\geq2)
    \end{align*}
    \begin{align*}
    \mbb{Q_N\p{N+4}}&=Q_N\p{N+4-Q_N\p{N+3}}+Q_N\p{N+4-Q_N\p{N+2}}\\
    &=Q_N\p{N+4-\pb{N+2}}+Q_N\p{N+4-\pb{N+1}}\\
    &=Q_N\p{2}+Q_N\p{3}=2+3=\mbb{5}\\
    &(N\geq3)
    \end{align*}
    \begin{align*}
    \mbb{Q_N\p{N+5}}&=Q_N\p{N+5-Q_N\p{N+4}}+Q_N\p{N+5-Q_N\p{N+3}}\\
    &=Q_N\p{N+5-5}+Q_N\p{N+5-\pb{N+2}}\\
    &=Q_N\p{N}+Q_N\p{3}=N+3=\mbb{N+3}\\
    &(N\geq3)
    \end{align*}
    \begin{align*}
    \mbb{Q_N\p{N+6}}&=Q_N\p{N+6-Q_N\p{N+5}}+Q_N\p{N+6-Q_N\p{N+4}}\\
    &=Q_N\p{N+6-\pb{N+3}}+Q_N\p{N+6-5}\\
    &=Q_N\p{3}+Q_N\p{N+1}=3+3=\mbb{6}\\
    &(N\geq3)
    \end{align*}
    \begin{align*}
    \mbb{Q_N\p{N+7}}&=Q_N\p{N+7-Q_N\p{N+6}}+Q_N\p{N+7-Q_N\p{N+5}}\\
    &=Q_N\p{N+7-6}+Q_N\p{N+7-\pb{N+3}}\\
    &=Q_N\p{N+1}+Q_N\p{4}=3+4=\mbb{7}\\
    &(N\geq4)
    \end{align*}
    \begin{align*}
    \mbb{Q_N\p{N+8}}&=Q_N\p{N+8-Q_N\p{N+7}}+Q_N\p{N+8-Q_N\p{N+6}}\\
    &=Q_N\p{N+8-7}+Q_N\p{N+8-6}\\
    &=Q_N\p{N+1}+Q_N\p{N+2}=3+N+1=\mbb{N+4}\\
    &(N\geq4)
    \end{align*}
    \begin{align*}
    \mbb{Q_N\p{N+9}}&=Q_N\p{N+9-Q_N\p{N+8}}+Q_N\p{N+9-Q_N\p{N+7}}\\
    &=Q_N\p{N+9-\pb{N+4}}+Q_N\p{N+9-7}\\
    &=Q_N\p{5}+Q_N\p{N+2}=5+N+1=\mbb{N+6}\\
    &(N\geq5)
    \end{align*}
    \begin{align*}
    \mbb{Q_N\p{N+10}}&=Q_N\p{N+10-Q_N\p{N+9}}+Q_N\p{N+10-Q_N\p{N+8}}\\
    &=Q_N\p{N+10-\pb{N+6}}+Q_N\p{N+10-\pb{N+4}}\\
    &=Q_N\p{4}+Q_N\p{6}=4+6=\mbb{10}\\
    &(N\geq6)
    \end{align*}
    \begin{align*}
    \mbb{Q_N\p{N+11}}&=Q_N\p{N+11-Q_N\p{N+10}}+Q_N\p{N+11-Q_N\p{N+9}}\\
    &=Q_N\p{N+11-10}+Q_N\p{N+11-\pb{N+6}}\\
    &=Q_N\p{N+1}+Q_N\p{5}=3+5=\mbb{8}\\
    &(N\geq6)
    \end{align*}
    \begin{align*}
    \mbb{Q_N\p{N+12}}&=Q_N\p{N+12-Q_N\p{N+11}}+Q_N\p{N+12-Q_N\p{N+10}}\\
    &=Q_N\p{N+12-8}+Q_N\p{N+12-10}\\
    &=Q_N\p{N+4}+Q_N\p{N+2}=5+N+1=\mbb{N+6}\\
    &(N\geq6)
    \end{align*}
    \begin{align*}
    \mbb{Q_N\p{N+13}}&=Q_N\p{N+13-Q_N\p{N+12}}+Q_N\p{N+13-Q_N\p{N+11}}\\
    &=Q_N\p{N+13-\pb{N+6}}+Q_N\p{N+13-8}\\
    &=Q_N\p{7}+Q_N\p{N+5}=7+N+3=\mbb{N+10}\\
    &(N\geq7)
    \end{align*}
    \begin{align*}
    \mbb{Q_N\p{N+14}}&=Q_N\p{N+14-Q_N\p{N+13}}+Q_N\p{N+14-Q_N\p{N+12}}\\
    &=Q_N\p{N+14-\pb{N+10}}+Q_N\p{N+14-\pb{N+6}}\\
    &=Q_N\p{4}+Q_N\p{8}=4+8=\mbb{12}\\
    &(N\geq8)
    \end{align*}
    \begin{align*}
    \mbb{Q_N\p{N+15}}&=Q_N\p{N+15-Q_N\p{N+14}}+Q_N\p{N+15-Q_N\p{N+13}}\\
    &=Q_N\p{N+15-12}+Q_N\p{N+15-\pb{N+10}}\\
    &=Q_N\p{N+3}+Q_N\p{5}=N+2+5=\mbb{N+7}\\
    &(N\geq8)
    \end{align*}
    \begin{align*}
    \mbb{Q_N\p{N+16}}&=Q_N\p{N+16-Q_N\p{N+15}}+Q_N\p{N+16-Q_N\p{N+14}}\\
    &=Q_N\p{N+16-\pb{N+7}}+Q_N\p{N+16-12}\\
    &=Q_N\p{9}+Q_N\p{N+4}=9+5=\mbb{14}\\
    &(N\geq9)
    \end{align*}
    \begin{align*}
    \mbb{Q_N\p{N+17}}&=Q_N\p{N+17-Q_N\p{N+16}}+Q_N\p{N+17-Q_N\p{N+15}}\\
    &=Q_N\p{N+17-14}+Q_N\p{N+17-\pb{N+7}}\\
    &=Q_N\p{N+3}+Q_N\p{10}=N+2+10=\mbb{N+12}\\
    &(N\geq10)
    \end{align*}
    \begin{align*}
    \mbb{Q_N\p{N+18}}&=Q_N\p{N+18-Q_N\p{N+17}}+Q_N\p{N+18-Q_N\p{N+16}}\\
    &=Q_N\p{N+18-\pb{N+12}}+Q_N\p{N+18-14}\\
    &=Q_N\p{6}+Q_N\p{N+4}=6+5=\mbb{11}\\
    &(N\geq10)
    \end{align*}
    \begin{align*}
    \mbb{Q_N\p{N+19}}&=Q_N\p{N+19-Q_N\p{N+18}}+Q_N\p{N+19-Q_N\p{N+17}}\\
    &=Q_N\p{N+19-11}+Q_N\p{N+19-\pb{N+12}}\\
    &=Q_N\p{N+8}+Q_N\p{7}=N+4+7=\mbb{N+11}\\
    &(N\geq10)
    \end{align*}
    \begin{align*}
    \mbb{Q_N\p{N+20}}&=Q_N\p{N+20-Q_N\p{N+19}}+Q_N\p{N+20-Q_N\p{N+18}}\\
    &=Q_N\p{N+20-\pb{N+11}}+Q_N\p{N+20-11}\\
    &=Q_N\p{9}+Q_N\p{N+9}=9+N+6=\mbb{N+15}\\
    &(N\geq10)
    \end{align*}
    \begin{align*}
    \mbb{Q_N\p{N+21}}&=Q_N\p{N+21-Q_N\p{N+20}}+Q_N\p{N+21-Q_N\p{N+19}}\\
    &=Q_N\p{N+21-\pb{N+15}}+Q_N\p{N+21-\pb{N+11}}\\
    &=Q_N\p{6}+Q_N\p{10}=6+10=\mbb{16}\\
    &(N\geq10)
    \end{align*}
    \begin{align*}
    \mbb{Q_N\p{N+22}}&=Q_N\p{N+22-Q_N\p{N+21}}+Q_N\p{N+22-Q_N\p{N+20}}\\
    &=Q_N\p{N+22-16}+Q_N\p{N+22-\pb{N+15}}\\
    &=Q_N\p{N+6}+Q_N\p{7}=6+7=\mbb{13}\\
    &(N\geq10)
    \end{align*}
    \begin{align*}
    \mbb{Q_N\p{N+23}}&=Q_N\p{N+23-Q_N\p{N+22}}+Q_N\p{N+23-Q_N\p{N+21}}\\
    &=Q_N\p{N+23-13}+Q_N\p{N+23-16}\\
    &=Q_N\p{N+10}+Q_N\p{N+7}=10+7=\mbb{17}\\
    &(N\geq10)
    \end{align*}
    \begin{align*}
    \mbb{Q_N\p{N+24}}&=Q_N\p{N+24-Q_N\p{N+23}}+Q_N\p{N+24-Q_N\p{N+22}}\\
    &=Q_N\p{N+24-17}+Q_N\p{N+24-13}\\
    &=Q_N\p{N+7}+Q_N\p{N+11}=7+8=\mbb{15}\\
    &(N\geq10)
    \end{align*}
    \begin{align*}
    \mbb{Q_N\p{N+25}}&=Q_N\p{N+25-Q_N\p{N+24}}+Q_N\p{N+25-Q_N\p{N+23}}\\
    &=Q_N\p{N+25-15}+Q_N\p{N+25-17}\\
    &=Q_N\p{N+10}+Q_N\p{N+8}=10+N+4=\mbb{N+14}\\
    &(N\geq10)
    \end{align*}
    \begin{align*}
    \mbb{Q_N\p{N+26}}&=Q_N\p{N+26-Q_N\p{N+25}}+Q_N\p{N+26-Q_N\p{N+24}}\\
    &=Q_N\p{N+26-\pb{N+14}}+Q_N\p{N+26-15}\\
    &=Q_N\p{12}+Q_N\p{N+11}=12+8=\mbb{20}\\
    &(N\geq12)
    \end{align*}
    \begin{align*}
    \mbb{Q_N\p{N+27}}&=Q_N\p{N+27-Q_N\p{N+26}}+Q_N\p{N+27-Q_N\p{N+25}}\\
    &=Q_N\p{N+27-20}+Q_N\p{N+27-\pb{N+14}}\\
    &=Q_N\p{N+7}+Q_N\p{13}=7+13=\mbb{20}\\
    &(N\geq13)
    \end{align*}
    \begin{align*}
    \mbb{Q_N\p{N+28}}&=Q_N\p{N+28-Q_N\p{N+27}}+Q_N\p{N+28-Q_N\p{N+26}}\\
    &=Q_N\p{N+28-20}+Q_N\p{N+28-20}\\
    &=Q_N\p{N+8}+Q_N\p{N+8}=N+4+N+4=\mbb{2N+8}\\
    &(N\geq13)
    \end{align*}

\section{Final terms of $Q_{\bar N}$ in the $C_j=0$ case}\label{app:160}

These are the final $158$ terms in $Q_{\bar N}\p{n}$ when $C_j=0$ and $N\geq118$.

\begin{multicols}{2}
\begin{itemize}
\item $Q_{\bar N}\p{A_j+3}=6$
\item $Q_{\bar N}\p{A_j+4}=7$
\item $Q_{\bar N}\p{A_j+5}=8$
\item $Q_{\bar N}\p{A_j+6}=8$
\item $Q_{\bar N}\p{A_j+7}=10$
\item $Q_{\bar N}\p{A_j+8}=A_j\pb{\frac{A_j-A_{j-1}-2}{5}}+B_j+3$
\item $Q_{\bar N}\p{A_j+9}=5$
\item $Q_{\bar N}\p{A_j+10}=8$
\item $Q_{\bar N}\p{A_j+11}=14$
\item $Q_{\bar N}\p{A_j+12}=10$
\item $Q_{\bar N}\p{A_j+13}=11$
\item $Q_{\bar N}\p{A_j+14}=13$
\item $Q_{\bar N}\p{A_j+15}=A_j+7$
\item $Q_{\bar N}\p{A_j+16}=15$
\item $Q_{\bar N}\p{A_j+17}=A_j+10$
\item $Q_{\bar N}\p{A_j+18}=14$
\item $Q_{\bar N}\p{A_j+19}=17$
\item $Q_{\bar N}\p{A_j+20}=14$
\item $Q_{\bar N}\p{A_j+21}=17$
\item $Q_{\bar N}\p{A_j+22}=A_j\pb{\frac{A_j-A_{j-1}-2}{5}}+B_j+11$
\item $Q_{\bar N}\p{A_j+23}=8$
\item $Q_{\bar N}\p{A_j+24}=15$
\item $Q_{\bar N}\p{A_j+25}=A_j+18$
\item $Q_{\bar N}\p{A_j+26}=22$
\item $Q_{\bar N}\p{A_j+27}=17$
\item $Q_{\bar N}\p{A_j+28}=22$
\item $Q_{\bar N}\p{A_j+29}=20$
\item $Q_{\bar N}\p{A_j+30}=A_j\pb{\frac{A_j-A_{j-1}-2}{5}}+B_j+11$
\item $Q_{\bar N}\p{A_j+31}=14$
\item $Q_{\bar N}\p{A_j+32}=14$
\item $Q_{\bar N}\p{A_j+33}=34$
\item $Q_{\bar N}\p{A_j+34}=A_j\pb{\frac{A_j-A_{j-1}-2}{5}}+B_j+14$
\item $Q_{\bar N}\p{A_j+35}=5$
\item $Q_{\bar N}\p{A_j+36}=14$
\item $Q_{\bar N}\p{A_j+37}=22$
\item $Q_{\bar N}\p{A_j+38}=30$
\item $Q_{\bar N}\p{A_j+39}=A_j+15$
\item $Q_{\bar N}\p{A_j+40}=33$
\item $Q_{\bar N}\p{A_j+41}=A_j\pb{\frac{A_j-A_{j-1}-2}{5}}+B_j+29$
\item $Q_{\bar N}\p{A_j+42}=5$
\item $Q_{\bar N}\p{A_j+43}=30$
\item $Q_{\bar N}\p{A_j+44}=A_j+28$
\item $Q_{\bar N}\p{A_j+45}=A_j+24$
\item $Q_{\bar N}\p{A_j+46}=40$
\item $Q_{\bar N}\p{A_j+47}=33$
\item $Q_{\bar N}\p{A_j+48}=A_j\pb{\frac{A_j-A_{j-1}-2}{5}}+A_j+B_j+10$
\item $Q_{\bar N}\p{A_j+49}=15$
\item $Q_{\bar N}\p{A_j+50}=5$
\item $Q_{\bar N}\p{A_j+51}=54$
\item $Q_{\bar N}\p{A_j+52}=36$
\item $Q_{\bar N}\p{A_j+53}=A_j+15$
\item $Q_{\bar N}\p{A_j+54}=53$
\item $Q_{\bar N}\p{A_j+55}=A_j+40$
\item $Q_{\bar N}\p{A_j+56}=22$
\item $Q_{\bar N}\p{A_j+57}=22$
\item $Q_{\bar N}\p{A_j+58}=28$
\item $Q_{\bar N}\p{A_j+59}=36$
\item $Q_{\bar N}\p{A_j+60}=29$
\item $Q_{\bar N}\p{A_j+61}=A_j+32$
\item $Q_{\bar N}\p{A_j+62}=64$
\item $Q_{\bar N}\p{A_j+63}=36$
\item $Q_{\bar N}\p{A_j+64}=A_j\pb{\frac{A_j-A_{j-1}-2}{5}}+B_j+22$
\item $Q_{\bar N}\p{A_j+65}=20$
\item $Q_{\bar N}\p{A_j+66}=40$
\item $Q_{\bar N}\p{A_j+67}=50$
\item $Q_{\bar N}\p{A_j+68}=36$
\item $Q_{\bar N}\p{A_j+69}=51$
\item $Q_{\bar N}\p{A_j+70}=A_j\pb{\frac{A_j-A_{j-1}-2}{5}}+B_j+31$
\item $Q_{\bar N}\p{A_j+71}=14$
\item $Q_{\bar N}\p{A_j+72}=28$
\item $Q_{\bar N}\p{A_j+73}=A_j+60$
\item $Q_{\bar N}\p{A_j+74}=54$
\item $Q_{\bar N}\p{A_j+75}=32$
\item $Q_{\bar N}\p{A_j+76}=A_j\pb{\frac{A_j-A_{j-1}-2}{5}}+A_j+B_j+39$
\item $Q_{\bar N}\p{A_j+77}=A_j+24$
\item $Q_{\bar N}\p{A_j+78}=54$
\item $Q_{\bar N}\p{A_j+79}=A_j+73$
\item $Q_{\bar N}\p{A_j+80}=29$
\item $Q_{\bar N}\p{A_j+81}=44$
\item $Q_{\bar N}\p{A_j+82}=A_j+45$
\item $Q_{\bar N}\p{A_j+83}=A_j+53$
\item $Q_{\bar N}\p{A_j+84}=70$
\item $Q_{\bar N}\p{A_j+85}=A_j+39$
\item $Q_{\bar N}\p{A_j+86}=62$
\item $Q_{\bar N}\p{A_j+87}=A_j+66$
\item $Q_{\bar N}\p{A_j+88}=44$
\item $Q_{\bar N}\p{A_j+89}=A_j+47$
\item $Q_{\bar N}\p{A_j+90}=83$
\item $Q_{\bar N}\p{A_j+91}=A_j\pb{\frac{A_j-A_{j-1}-2}{5}}+B_j+47$
\item $Q_{\bar N}\p{A_j+92}=5$
\item $Q_{\bar N}\p{A_j+93}=44$
\item $Q_{\bar N}\p{A_j+94}=A_j+52$
\item $Q_{\bar N}\p{A_j+95}=97$
\item $Q_{\bar N}\p{A_j+96}=49$
\item $Q_{\bar N}\p{A_j+97}=2A_j\pb{\frac{A_j-A_{j-1}-2}{5}}+A_j+2B_j+10$
\item $Q_{\bar N}\p{A_j+98}=15$
\item $Q_{\bar N}\p{A_j+99}=70$
\item $Q_{\bar N}\p{A_j+100}=A_j\pb{\frac{A_j-A_{j-1}-2}{5}}+A_j+B_j+50$
\item $Q_{\bar N}\p{A_j+101}=14$
\item $Q_{\bar N}\p{A_j+102}=44$
\item $Q_{\bar N}\p{A_j+103}=A_j+83$
\item $Q_{\bar N}\p{A_j+104}=50$
\item $Q_{\bar N}\p{A_j+105}=A_j+62$
\item $Q_{\bar N}\p{A_j+106}=66$
\item $Q_{\bar N}\p{A_j+107}=A_j\pb{\frac{A_j-A_{j-1}-2}{5}}+B_j+74$
\item $Q_{\bar N}\p{A_j+108}=5$
\item $Q_{\bar N}\p{A_j+109}=50$
\item $Q_{\bar N}\p{A_j+110}=A_j+91$
\item $Q_{\bar N}\p{A_j+111}=A_j+52$
\item $Q_{\bar N}\p{A_j+112}=81$
\item $Q_{\bar N}\p{A_j+113}=75$
\item $Q_{\bar N}\p{A_j+114}=A_j+49$
\item $Q_{\bar N}\p{A_j+115}=99$
\item $Q_{\bar N}\p{A_j+116}=A_j+77$
\item $Q_{\bar N}\p{A_j+117}=54$
\item $Q_{\bar N}\p{A_j+118}=A_j\pb{\frac{A_j-A_{j-1}-2}{5}}+B_j+63$
\item $Q_{\bar N}\p{A_j+119}=20$
\item $Q_{\bar N}\p{A_j+120}=A_j\pb{\frac{A_j-A_{j-1}-2}{5}}+A_j+B_j+50$
\item $Q_{\bar N}\p{A_j+121}=14$
\item $Q_{\bar N}\p{A_j+122}=5$
\item $Q_{\bar N}\p{A_j+123}=A_j\pb{\frac{A_j-A_{j-1}-2}{5}}+B_j+113$
\item $Q_{\bar N}\p{A_j+124}=20$
\item $Q_{\bar N}\p{A_j+125}=A_j+62$
\item $Q_{\bar N}\p{A_j+126}=130$
\item $Q_{\bar N}\p{A_j+127}=A_j+65$
\item $Q_{\bar N}\p{A_j+128}=66$
\item $Q_{\bar N}\p{A_j+129}=100$
\item $Q_{\bar N}\p{A_j+130}=2A_j\pb{\frac{A_j-A_{j-1}-2}{5}}+2B_j+33$
\item $Q_{\bar N}\p{A_j+131}=14$
\item $Q_{\bar N}\p{A_j+132}=A_j\pb{\frac{A_j-A_{j-1}-2}{5}}+B_j+63$
\item $Q_{\bar N}\p{A_j+133}=20$
\item $Q_{\bar N}\p{A_j+134}=A_j+49$
\item $Q_{\bar N}\p{A_j+135}=185$
\item $Q_{\bar N}\p{A_j+136}=92$
\item $Q_{\bar N}\p{A_j+137}=2A_j+24$
\item $Q_{\bar N}\p{A_j+138}=40$
\item $Q_{\bar N}\p{A_j+139}=70$
\item $Q_{\bar N}\p{A_j+140}=2A_j\pb{\frac{A_j-A_{j-1}-2}{5}}+A_j+2B_j+81$
\item $Q_{\bar N}\p{A_j+141}=14$
\item $Q_{\bar N}\p{A_j+142}=66$
\item $Q_{\bar N}\p{A_j+143}=A_j+124$
\item $Q_{\bar N}\p{A_j+144}=74$
\item $Q_{\bar N}\p{A_j+145}=35$
\item $Q_{\bar N}\p{A_j+146}=A_j+80$
\item $Q_{\bar N}\p{A_j+147}=148$
\item $Q_{\bar N}\p{A_j+148}=A_j\pb{\frac{A_j-A_{j-1}-2}{5}}+B_j+68$
\item $Q_{\bar N}\p{A_j+149}=5$
\item $Q_{\bar N}\p{A_j+150}=35$
\item $Q_{\bar N}\p{A_j+151}=2A_j+157$
\item $Q_{\bar N}\p{A_j+152}=54$
\item $Q_{\bar N}\p{A_j+153}=70$
\item $Q_{\bar N}\p{A_j+154}=A_j\pb{\frac{A_j-A_{j-1}-2}{5}}+A_j+B_j+120$
\item $Q_{\bar N}\p{A_j+155}=A_j+39$
\item $Q_{\bar N}\p{A_j+156}=117$
\item $Q_{\bar N}\p{A_j+157}=151$
\item $Q_{\bar N}\p{A_j+158}=A_j\pb{\frac{A_j-A_{j-1}-2}{5}}+B_j+39$
\item $Q_{\bar N}\p{A_j+159}=A_j\pb{\frac{A_j-A_{j-1}-2}{5}}+B_j+3$
\item $Q_{\bar N}\p{A_j+160}=0$
\end{itemize}
\end{multicols}

\section*{Acknowledgements}
I would like to thank Dr.\ Doron Zeilberger of Rutgers University for introducing me to the Hofstadter $Q$-recurrence, and to him along with Dr.\ Michael Saks, Dr.\ Swastik Kopparty, and Dr.\ Neil Sloane (my dissertation committee) for their feedback on this work. I would also like to thank Yonah Biers-Ariel of Rutgers University for proofreading a draft of this paper and providing me with useful feedback.

\begin{bibdiv}
\begin{biblist}
\bib{hofv}{article}
{
  title={On the behavior of a variant of {H}ofstadter's {Q}-sequence},
  author={Balamohan, B.},
  author={Kuznetsov, A.},
  author={Tanny, Stephen},
  journal={J. Integer Seq.},
  volume={10},
  pages={29},
  year={2007}
}
\bib{con}{misc}
{
author={Conolly, B.W.},
title={Meta-Fibonacci sequences, Chapter XII in S. Vajda, Fibonacci \& Lucas Numbers, and the Golden Section},
year={1989},
publisher={Ellis Horwood Limited},
address={Chichester, England}
}
\bib{erickson}{article}
{
  title={Nested recurrence relations with {C}onolly-like solutions},
  author={Erickson, Alejandro},
  author={Isgur, Abraham},
  author={Jackson, Bradley W.},
  author={Ruskey, Frank},
  author={Tanny, Stephen M.},
  journal={SIAM J. Discrete Math.},
  volume={26},
  number={1},
  pages={206--238},
  year={2012},
  publisher={SIAM}
}
\bib{symbhof}{article}
{
  title={Discovering Linear-Recurrent Solutions to {H}ofstadter-Like Recurrences Using Symbolic Computation},
  author={Fox, Nathan},
  journal={J. Symbolic Comput.},
  volume={87},
  pages={99--126},
  year={2018},
  publisher={Elsevier}
}
\bib{thesis}{thesis}
{
  title={An Exploration of Nested Recurrences Using {E}xperimental {M}athematics},
  author={Fox, Nathan},
  year={2017},
  school={Ph.D.\ dissertation, Rutgers, The State University of New Jersey}
}
\bib{genrusk}{article}
{
title={Linear recurrent subsequences of generalized meta-{F}ibonacci sequences},
  author={Fox, Nathan},
  journal={J. Difference Equ. Appl.},
  year={2016},
  publisher={Taylor \& Francis},
}
\bib{gengol}{article}
{
  title={Quasipolynomial Solutions to the {H}ofstadter {Q}-Recurrence},
  author={Fox, Nathan},
  journal={Integers},
  volume={16},
  pages={A68},
  year={2016}
}
\bib{slowtrihof}{article}
{
  title={A Slow Relative of {H}ofstadter's {Q}-Sequence},
  author={Fox, Nathan},
  journal={J. Integer Seq.},
  volume={20},
  number={2},
  pages={3},
  year={2017}
}
\bib{golomb}{misc}
{
author={Golomb, S.W.},
title={Discrete Chaos: Sequences Satisfying \quot{Strange} Recursions},
year={1991},
publisher={unpublished manuscript}
}
\bib{geb}{book}
{
  author = {Hofstadter, Douglas},
  title = {G\"odel, {E}scher, {B}ach: an {E}ternal {G}olden {B}raid}, 
  publisher = {Basic Books},
  address = {New York},
  year = {1979}, 
  pages = {137},
}
\bib{isgur1}{article}
{
  title={Constructing New Families of Nested Recursions with Slow Solutions},
  author={Isgur, A.},
  author={Lech, R.},
  author={Moore, S.},
  author={Tanny, S.},
  author={Verberne, Y.},
  author={Zhang, Y.},
  journal={SIAM J. Discrete Math.},
  volume={30},
  number={2},
  pages={1128--1147},
  year={2016},
  publisher={SIAM},
}
\bib{isgur2}{article}
{
  title={Trees and meta-{F}ibonacci sequences},
  author={Isgur, Abraham},
  author={Reiss, David},
  author={Tanny, Stephen},
  journal={Electron. J. Combin.},
  volume={16},
  number={R129},
  pages={1},
  year={2009},
}
\bib{rusk}{article}
{
author = {Ruskey, F.}
title = {Fibonacci Meets Hofstadter},
journal={Fibonacci Quart.},
volume= {49},
year = {2011},
number = {3},
pages = {227-230}
}
\bib{oeis}{misc}
{
  author = {Sloane, N.J.A.},
  title = {{O}{E}{I}{S} {F}oundation {I}nc.},
  year = {2018},
  publisher = {The On-Line Encyclopedia of Integer Sequences},
  note = {http://oeis.org/},
}
\bib{tanny}{article}
{
  title={A well-behaved cousin of the {H}ofstadter sequence},
  author={Tanny, Stephen M},
  journal={Discrete Math.},
  volume={105},
  number={1},
  pages={227--239},
  year={1992},
  publisher={Elsevier},
}
\end{biblist}
\end{bibdiv}
\end{document}